\def\BB4#1{\textcolor{red}{#1}}
\def\rank{{\rm rank \, }}
\renewcommand{\deg}{{\rm deg} \,}
\newcommand{\mo}[1]{\left|{#1}\right|}
\newcommand{\conj}[1]{\left\{ {#1}\right\}}
\numberwithin{equation}{section}
\newtheorem{theorem}{Theorem}[section]
\newtheorem{lemma}[theorem]{Lemma}
\newtheorem{corollary}[theorem]{Corollary}
\newtheorem{conjecture}{Conjecture}
\newtheorem{Definition}[theorem]{Definition}
\newtheorem{Remark}[theorem]{Remark}
\newenvironment{remark}{\begin{Remark}\rm}{\end{Remark}}
\newtheorem{Example}[theorem]{Example}
\newenvironment{example}{\begin{Example}\rm}{\end{Example}}
\newtheorem{Problem}[theorem]{Problem}
\begin{document}
\title{Algebraic properties of robust Pad\'e approximants}
\author{Bernhard Beckermann,
Ana C. Matos \thanks{
Laboratoire Painlev\'e UMR 8524, UFR Math\'ematiques --
M3, Universit\'e de Lille, F-59655 Villeneuve d'Ascq CEDEX, France. E-mail:
{\tt $\{$bbecker,matos$\}$@math.univ-lille1.fr}. Supported in part by the Labex CEMPI  (ANR-11-LABX-0007-01).}}

\date{}
\maketitle
\centerline{\em Dedicated to the memory of our friend Herbert Stahl and our colleague A.A.\ Gonchar.}

\begin{abstract}
For a recent new numerical method for computing so-called robust Pad\'e approximants through SVD techniques, the authors gave numerical evidence that such approximants are insensitive to perturbations in the data, and do not have so-called spurious poles, that is, poles with a close-by zero or poles with small residuals. A black box procedure for eliminating spurious poles would have a major impact on the convergence theory of Pad\'e approximants since it is known that convergence in capacity plus absence of poles in some domain $D$ implies locally uniform convergence in $D$.

  In the present paper we provide a proof for forward stability (or robustness), and show absence of spurious poles for the subclass of so-called well-conditioned Pad\'e approximants. We also
  give a numerical example of some robust Pad\'e approximant which has spurious poles, and
  discuss related questions. It turns out that it is not sufficient to discuss only linear algebra properties of the underlying rectangular Toeplitz matrix, since in our results other matrices like Sylvester matrices also occur. These types of matrices have been used before in numerical greatest common divisor computations.

\end{abstract}

\noindent {\bf Key words: } Pad\'e approximation, SVD, regularization, Froissart doublet,  spurious poles.

\vspace{0.5cm}

\noindent {\bf AMS Classification (2010): } 41A21, 65F22

\section{Introduction and statement of the main results}\label{sec1}

A popular method for approximation, for analytic continuation or for detection of singularities of a function $f$ knowing 
the first terms of its Taylor expansion at zero $f(z)= \sum_{j=0}^{m+n} c_j z^j+\mathcal O(z^{m+n+1})_{z\to 0}$ is to compute its $[m|n]$ Pad\'e approximant at zero $p/q$, namely a rational function satisfying
\begin{equation} \label{definition_Pade}
     p(z)=\sum_{j=0}^m p_j z^j , \quad
     q(z)=\sum_{j=0}^n q_j z^j \not\equiv 0 , \quad
     f(z)q(z)-p(z) = \mathcal O(z^{m+n+1})_{z\to 0}.
\end{equation}
It is well known \cite[Section~1]{BGM} that there always exists an $[m|n]$ Pad\'e approximant: we just have to find a non-trivial solution of the homogeneous system of $n$ equations and $n+1$ unknowns with Toeplitz structure
\begin{equation} \label{homogeneous_system}
    C \, \mbox{vec}(q)=0, \quad C = \left[\begin{array}{cccc}
c_{m+1} & \cdots & c_{m-n+2} & c_{m-n+1}\\
c_{m+2} & \cdots & c_{m-n+3} & c_{m-n+2}\\
\vdots &  & \vdots&\vdots\\
c_{m+n} & \cdots & c_{m+1} & c_{m}
\end{array}\right]
, \quad
\mbox{vec}(q)=\left[\begin{array}{cc} q_0 \\ q_1 \\ \vdots \\ q_n
\end{array}\right],
\end{equation}
with the convention $c_j=0$ for $j<0$,
and then find the coefficients of $p$ from \eqref{definition_Pade}.
Whereas \eqref{homogeneous_system} has infinitely many solutions, it is also known \cite{BGM} that the rational function $p/q$ is unique.

Though many theoretical results \cite[Section~6]{BGM} show the usefulness of sequences of Pad\'e approximants in approximating $f$ or its singularities, there are drawbacks making it somehow difficult to interpret correctly the approximation power of such approximants: it might happen that the rational function has poles at places where the function $f$ has no singularities, so-called spurious poles. This somehow vague notion needs some more explanation, for a precise (asymptotic) definition see the work \cite[Definition~8]{St97a} or \cite{Stahl} of Stahl: the Pad\'e convergence theory like the Nuttall-Pommerenke Theorem for meromorphic functions $f$ \cite[Theorem~6.5.4]{BGM} or the celebrated Stahl Theorem for algebraic functions $f$ \cite[Theorem~1.2]{St97b}, \cite[Theorem~6.6.9]{BGM} (or more general multivalued functions) tells us that there are domains $D$ of analyticity of $f$ such that the $[n|n]$ Pad\'e approximants tend for $n \to \infty$ to $f$ in capacity on any compact subset $K$ of $D$. That is,
given any threshold $\epsilon>0$, the set of exceptional points in $K$ where the
error is larger than $\epsilon$ becomes quickly ``small'', 
see, e.g., \cite[Section~6.6]{BGM} and the references therein. By the Gonchar Lemma \cite[Lemma~1]{G75}, convergence in capacity and absence of poles implies uniform convergence, but there are examples showing that there might be poles of an infinite subsequence of $[n|n]$ Pad\'e approximants in $K$, which of course makes it impossible to have uniform convergence in $K$. Stahl shows in \cite[Theorem~3.7]{St96} that one can establish uniform convergence for the special case of hyperelliptic functions by simply dropping all terms in a partial fraction decomposition with poles in $D$. More generally, for algebraic functions, Stahl mentions in \cite[Remark~(8) for Theorem~1.2]{St97b} an elimination procedure for spurious poles, but without giving details.

The notion \cite{Stahl} of asymptotically spurious poles of course is intractable on a computer since we are able to compute only finitely many approximants. In addition, the computed approximants will be affected by finite precision arithmetic, or by noise on the given Taylor coefficients. It was suggested by Froissart \cite{Froissart} and further analyzed for particular functions in \cite{Bessis,GP97,GP99} that instead we should identify poles of Pad\'e approximants which come along with a ``close-by''
zero, so-called {\em Froissart doublets}. The occurrence of such doublets is observed experimentally to increase in case of noise on the Taylor coefficients \cite{Bessis}. Stahl shows in \cite{Stahl} that in fact asymptotically spurious poles give raise to ``asymptotical Froissart doublets''.

Another popular method to detect ``doubtful'' poles, adapted for instance in \cite{GGT12}, is to identify poles $z_k$ which have ``small'' residuals $a_k$ corresponding to terms $\frac{a_k}{z-z_k}$ in the partial fraction decomposition of a Pad\'e approximant $p/q$. 
Notice that such poles are generically of multiplicity one.

Before going further, some notation. We denote by $\mathcal R_{m,n}$ the set of rational functions with numerator (and denominator) degree not exceeding $m$ (and $n$, respectively). In what follows, $\| \cdot \|$ will always denote Euclidian norm together with the induced spectral norm
of a possibly rectangular matrix $A$. The matrices $A$ under consideration will always have full row rank $\ell$, in which case we may write the spectral condition number as
$$
     \kappa(A)=\frac{\sigma_1(A)}{\sigma_\ell(A)} = \| A \| \, \| A^\dagger \|
$$
with $\sigma_j(A)$ the $j$th largest singular value of $A$, and the pseudoinverse $A^\dagger=A^* (A A^*)^{-1}$.
We notice that a change of norms might improve some of our estimates below, in particular we do not claim that any of the powers of $m+n+1$ occurring below are optimal. Hence we will sometimes use the writing $a_1 \lesssim a_2$ meaning that there exist modest constants $b,r>0$ 
not depending on $f$ or $m,n$ such that $a_1 \leq b (m+n+1)^r \, a_2$. Also, $a_1\sim a_2$ means that $a_1 \lesssim a_2$  and $a_2 \lesssim a_1$.
As suggested in \cite{GGT12}, before computing Pad\'e approximants of $f$ one should replace $f$ by
a suitably scaled counterpart $a f(bz)$ with nonzero scalars $a,b$ chosen such that
\begin{equation} \label{scaling}
    \sum_{j=0}^{m+n} |c_j|^2=1.
\end{equation}
Here the rescaling factor $b$ should be chosen in order to obtain quantities $|c_j|\leq 1$ of comparable size, which asymptotically means that we rescale the complex plane in a way such that a meromorphic function $f$ becomes analytic in $|z|<1$. Finally, in order to simplify notation, in what follows we always fix $m$ and $n$ and drop these indices.

\subsection{Robust Pad\'e approximants, degeneracy and related matrices}
Recently \cite{GGT12}, Gonnet, G\"uttel and Trefethen suggested
the interesting concept of a robust $[m'|n']$ Pad\'e approximant $p/q$ based on SVD computations. This object essentially is an $[m|n]$ Pad\'e approximant (at least for exact arithmetic) for suitably chosen $m \leq m'$ and $n\leq n'$. Though the suggested numerical method to find $m,n$ from $m',n'$ is much more elaborate, one may get an idea of the method by thinking of $(m,n)$ as being the upper left corner of a ``numerical block'' of the Pad\'e table containing the coordinate $(m',n')$, or being on the upper or left border of such a ``block'' and on the same diagonal $m'-n'=m-n$. In the numerical
experiments reported in \cite{GGT12}, the shape of such a ``numerical block'' is either a (finite or infinite) square or an infinite diagonal. Their robust $[m|n]$ Pad\'e approximant $p/q$ has the following properties
\begin{description}
\item[(P1)]  it is {\em nondegenerate} in the sense that the polynomials $p$ and $q$ are co-prime, and that the {\em defect} $\min \{ m-\deg p,n-\deg q \}$ is equal to zero;
\item[(P2)]  the $n$th largest singular value $\sigma_n(C)$ is larger than a certain threshold;
\item[(P3)]  the denominator is given by choosing as $\mbox{vec}(q)$ a right singular vector of norm $1$ corresponding to the singular value  $\sigma_{n+1}(C)=0$.
\end{description}
We can read from {\bf (P2),(P3)} that indeed $C$ has maximal 
numerical rank $n$, and thus $\mbox{vec}(q)$ spans the numerical kernel of $C$, see also \cite{AI}.
Moreover, according to \eqref{scaling} and {\bf (P2)}, the 
condition number $\kappa(C)$ 
will be of moderate size.

The authors in \cite{GGT12} use analogies from well-known regularization techniques for linear algebra problems in order to justify theoretically their approach.
Their paper contains many numerical examples
which lead one to believe that these new ``regularized'' approximants are indeed robust, that is, small perturbations in the input like noisy Taylor coefficients produce similar approximants, see also \S\ref{sec_robust} below for this notion of robustness or forward stability. Also, in all numerical
 experiments reported in \cite{GGT12}, these robust approximants do no longer have Froissart doublets nor small residuals.
The aim of the present paper is to give some theoretical results complementing these numerically observed phenomena.
For instance, we present a numerical example of robust approximants where spurious poles have not been eliminated. In addition, we describe a subclass of robust approximants where we can insure that we have eliminated spurious poles.
All our statements only apply to nondegenerate $[m|n]$ Pad\'e approximants $p/q$, and we will see that {\bf (P2)} will enable us to show that the underlying nonlinear map is forward well-conditioned. 
For the backward condition number, for Froissart doublets or for small residuals, other matrices $T, S,$ and $Q$ do occur, which are defined as follows:

We first observe that \eqref{definition_Pade}, \eqref{homogeneous_system} is equivalent to solving
\begin{equation} \label{homogeneous_system2}
    T \, \left[\begin{array}{cc}
\mbox{vec}(p) \\ \mbox{vec}(q)\end{array}\right]=0, \quad T = \left[\begin{array}{cccccccc}
1 & 0 & \cdots & 0 & -c_0 & 0 & \cdots & 0 \\
0 & 1 & \ddots & \vdots & -c_1 & -c_0 & \ddots & \vdots \\
\vdots & \ddots & \ddots & 0 & \vdots & \ddots & \ddots & 0 \\
0 & \cdots & 0 & 1 & -c_m & & \ddots & -c_0 \\
0 & \cdots & \cdots & 0 & -c_{m+1} & \cdots & \cdots & -c_1 \\
\vdots &  & & \vdots&\vdots &  & & \vdots\\
0 & \cdots & \cdots & 0 &
-c_{m+n} & \cdots & \cdots & -c_{m}
\end{array}\right]\in\mathbb C^{(m+n+1)\times (m+n+2)},
\end{equation}
$T$ being block upper triangular, with the lower right block given by $-C$. We will also require the two matrices
\begin{equation} \label{def_Sylvester}
   Q = \left[\begin{array}{ccccccc} q_0 & 0 & \cdots & \cdots & \cdots & 0 \\
   \vdots & \ddots & \ddots & & & \vdots \\
   q_n & & q_0 & 0 & \cdots & 0 \\
   0 & \ddots & & \ddots & \ddots & \vdots\\
   \vdots &\ddots & \ddots & & \ddots & 0\\
   0 & \cdots & 0 & q_n & \cdots & q_0
\end{array}\right], \,\,
  S=
  \left[\begin{array}{cccccccc}
  q_0 & 0 & \cdots & 0 &  -p_0 & 0 & \cdots & 0 \\
   \vdots & \ddots & \ddots & \vdots & \vdots & \ddots & \ddots & \vdots \\
   q_n & & \ddots & 0 & -p_m & & \ddots & 0 \\
   0 & \ddots & & q_0 &0 & \ddots & & -p_0  \\
   \vdots &\ddots & \ddots & \vdots & \vdots &\ddots & \ddots & \vdots\\
   0 & \cdots & 0 & q_n & 0 & \cdots & 0 & -p_m
   \end{array}\right],
\end{equation}
with $Q \in \mathbb C^{(m+n+1)\times (m+n+1)}$, and $S=S(q,-p)\in\mathbb C^{(m+n+1)\times (m+n+2)}$ having one more row and two more columns than the usual Sylvester matrix of two polynomials. Notice that these matrices are related through
\begin{equation} \label{S=QT}
    S = Q T .
\end{equation}

\subsection{Continuity and conditioning of the Pad\'e map}\label{sec_robust}
For defining a (nonlinear) Pad\'e map $F$
\begin{equation} \label{def_Pade_map1}
      F : \mathbb C^{m+n+1} \ni c = (c_0,...,c_{m+n})^t \mapsto
      y=\left[\begin{array}{cc} \mbox{vec}(p) \\ \mbox{vec}(q)
       \end{array}\right] \in \mathbb C^{m+n+2}
\end{equation}
mapping the vector of $(m+n+1)$ Taylor coefficients to the coefficient vector in the basis of monomials of the numerator and denominator of an $[m|n]$ Pad\'e approximant $p/q$ we have to be a bit careful due to degeneracies in the Pad\'e table, also we have to fix the normalization (norm and phase) of the coefficients. Uniqueness is obtained by taking any $p,q$ of degree at most $m$, and $n$, respectively, satisfying \eqref{homogeneous_system2}, by canceling out a possible non-trivial greatest common divisor such that $q(0)\neq 0$ (since $p(0)=c_0 q(0)$), and then normalize in a suitable manner by a complex scalar, here
\begin{equation} \label{def_Pade_map2}
      \| F(c) \|^2 = \| \mbox{vec}(p) \|^2 + \| \mbox{vec}(q) \|^2 = 1 , \quad q(0)>0.
\end{equation}
Notice that a non-trivial greatest common divisor only occurs for degenerate Pad\'e approximants, and only here it might happen that $T F(c)\neq 0$. Also, $F$ is neither injective nor surjective.
By adapting the techniques of \cite{WW}, one may show the following result which is stated here without proof and which shows the importance of degeneracy.

\begin{theorem}\label{thm_continuity}
   $F$ is continuous in a neighborhood of $c$ if and only if its $[m|n]$ Pad\'e approximant $F(c)$ is nondegenerate.
\end{theorem}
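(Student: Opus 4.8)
The plan is to reduce the continuity of $F$ at a point $c$ to a rank property of the structured matrix $T=T(c)$, and to establish the discontinuity in the degenerate case by an explicit ``block--breaking'' perturbation. Throughout, write $F(c)=(\mbox{vec}(p_0),\mbox{vec}(q_0))$ with $p_0/q_0$ in lowest terms, $\deg p_0=\mu$, $\deg q_0=\nu$; recall that a Padé denominator in lowest terms never vanishes at the origin, so $q_0(0)\neq0$. The first step would be to record the algebraic reformulation that drives both directions:
$$ F(c)\ \text{is nondegenerate}\quad\Longleftrightarrow\quad \rank T(c)=m+n+1\ \ \text{and}\ \ T(c)\,F(c)=0 . $$
For $\Rightarrow$: nondegeneracy means $\mu=m$ or $\nu=n$, and the block structure of the Padé table \cite[Section~1]{BGM} then yields $f q_0-p_0=\mathcal{O}(z^{m+n+1})$, i.e.\ $T(c)F(c)=0$; moreover, for any $(P,Q)\in\Ker T(c)$ the polynomial $Qp_0-q_0P=q_0(fQ-P)-Q(fq_0-p_0)$ has degree $\le m+n$ and vanishes to order $m+n+1$, hence is zero, so $q_0\mid Q$ (or $p_0\mid P$), and the degree constraint forces $(P,Q)$ to be a scalar multiple of $(p_0,q_0)$; thus $\dim\Ker T(c)=1$. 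For $\Leftarrow$, argue contrapositively: if $F(c)$ is degenerate then either $T(c)F(c)\neq0$, or $T(c)F(c)=0$ and then $(zp_0,zq_0)$ is a further element of $\Ker T(c)$, independent of $(p_0,q_0)$ and admissible since $\mu<m$, $\nu<n$, so $\rank T(c)<m+n+1$; in either case the right-hand side fails.

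For the ``if'' direction I would proceed as follows. Assuming $F(c)$ nondegenerate, the reformulation gives $\sigma_{m+n+1}(T(c))>0$; since $c'\mapsto T(c')$ is polynomial and singular values are continuous, $T(c')$ keeps full row rank near $c$, its kernel stays one--dimensional, and its generator — the right singular vector for the (simple, separated) singular value $\sigma_{m+n+2}=0$ — depends continuously on $c'$, the phase being fixed continuously by $q(0)>0$ because $q_0(0)\neq0$ persists. The one point still to check is that near $c$ this generator is already a \emph{coprime} pair with nonzero value at $0$, so that the cancellation step in the definition of $F$ is trivial and $F(c')$ \emph{equals} the normalized generator: a common factor $g$ with $g(0)\neq0$ would produce a second independent kernel vector (divide through by $g$), contradicting one--dimensionality, while $z\mid\gcd$ is excluded since the $q$--component stays close to $q_0$ with $q_0(0)\neq0$. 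This gives continuity of $F$ on a neighbourhood of $c$.

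For the ``only if'' direction I would show the contrapositive: if $F(c)$ is degenerate, so $\mu<m$, $\nu<n$, $d:=\min(m-\mu,n-\nu)\ge1$, then there is a sequence $c_\varepsilon\to c$ with $F(c_\varepsilon)\not\to F(c)$. Let $\omega\in\{0,\dots,m+n+1\}$ be the order of $fq_0-p_0$ truncated at $m+n+1$; a computation with the linearised equations identifies $\Ker C(c)=\{\,\mbox{vec}(s\,q_0):\ z^{\max(0,\,m+n+1-\omega)}\mid s,\ \deg s\le d\,\}$. If $\omega\le m+n$, take \emph{any} sequence $c_\varepsilon\to c$ whose $[m|n]$ approximant is nondegenerate (such data are dense, being the complement of a proper algebraic set \cite[Section~1]{BGM}); its reduced, norm--one representative $(p_\varepsilon,q_\varepsilon)$ lies in $\Ker T(c_\varepsilon)$ by the reformulation applied at $c_\varepsilon$, so by compactness every accumulation point lies in $\Ker T(c)$ and hence equals $(sp_0,sq_0)$ with $\deg s\ge m+n+1-\omega\ge1$; as this is not a scalar multiple of $(p_0,q_0)$, no accumulation point of $F(c_\varepsilon)$ is $F(c)$. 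If $\omega=m+n+1$, then $f\equiv p_0/q_0\pmod{z^{m+n+1}}$, so $c$ is also the truncated Taylor data of $r_0:=p_0/q_0$; choosing a polynomial $g$ of degree $d$ with $g(0)q_0(0)\neq0$ and $g$ coprime to $p_0q_0$, the type--$(m,n)$ function $R_\varepsilon:=r_0+\varepsilon\,z^{m}/(g\,q_0)$ has reduced representation $(gp_0+\varepsilon z^{m},\,gq_0)$, is its own $[m|n]$ Padé approximant, has truncated data $c_\varepsilon\to c$, and $F(c_\varepsilon)$ converges to a unit vector proportional to $(\mbox{vec}(gp_0),\mbox{vec}(gq_0))$, which differs from $F(c)$ since $\deg g\ge1$.

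I expect the ``if'' part to be essentially routine once the rank reformulation is in hand — it is just continuity of a simple smallest singular vector. The main obstacle is the ``only if'' part: one must be sure that the perturbed denominators degenerate \emph{towards a multiple $s\,q_0$ with $\deg s\ge1$} rather than towards a scalar multiple of $q_0$, and it is precisely here that the description of $\Ker C(c)$ (equivalently, the block structure of the Padé table) and the case split $\omega\le m+n$ versus $\omega=m+n+1$ become essential.
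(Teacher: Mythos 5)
The paper does not actually prove this theorem: it is stated with the remark ``By adapting the techniques of \cite{WW}, one may show the following result which is stated here without proof.'' So there is no in-paper proof to compare against. The only related material is the proof of Theorem~\ref{thm_stability}(a), where the Implicit Function Theorem is used to show that $F$ is a local diffeomorphism at a point of continuity; that argument implicitly re-establishes the ``if'' direction (nondegenerate $\Rightarrow$ continuous, in fact smooth) but says nothing about the ``only if'' direction.

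Your proof looks correct to me and is, to my reading, a legitimate self-contained argument. The reformulation ``$F(c)$ nondegenerate $\Leftrightarrow$ $\rank T(c)=m+n+1$ and $T(c)F(c)=0$'' is right, and your kernel description $\Ker T(c)=\{(\mbox{vec}(sp_0),\mbox{vec}(sq_0)) : z^{\max(0,m+n+1-\omega)}\mid s,\ \deg s\le d\}$ follows directly from uniqueness of the Pad\'e rational function together with the degree and order constraints. The ``if'' direction via perturbation of the simple zero singular value of the fat matrix $T(c')$ is clean, and your coprimality/phase check (no common factor $g$ with $g(0)\neq0$ because that would double the kernel; no factor $z$ because the $q(0)$-entry stays near $q_0(0)>0$) correctly identifies the normalized kernel generator with $F(c')$. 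For the ``only if'' direction, the case split on $\omega$ is exactly what is needed, because your explicit perturbation $R_\varepsilon = r_0+\varepsilon z^m/(gq_0)$ has truncated data converging to $c$ only when $\omega=m+n+1$; and when $\omega\le m+n$ the compactness argument works because every nonzero element of $\Ker T(c)$ has vanishing $q(0)$-entry (since $z\mid s$), while $F(c)$ has $q_0(0)>0$, so accumulation points can never equal $F(c)$. Two small caveats you might make explicit in a final write-up: in the $\omega=m+n+1$ construction the coprimality of $(gp_0+\varepsilon z^m, gq_0)$ and the equality $\deg(gp_0+\varepsilon z^m)=m$ only hold outside finitely many values of $\varepsilon$, so one should say ``for generic small $\varepsilon$'' or restrict to a sequence $\varepsilon_k\to0$ avoiding them; and the density of nondegenerate data deserves one sentence (nondegeneracy is the nonvanishing of a Toeplitz/Hankel determinant in $c$, hence a Zariski-open dense condition). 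Compared with the IFT-based route the paper hints at via \cite{WW} and uses for Theorem~\ref{thm_stability}(a), your approach is more elementary (needing only continuity of a simple smallest singular vector, not differentiability) and, crucially, it actually supplies the ``only if'' direction, which the paper leaves entirely to \cite{WW}.
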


For studying 
conditioning we will restrict ourselves to the {\em real Pad\'e map}, namely the restriction of $F$ onto $\mathbb R^{m+n+1}$, also denoted by $F$, and hence $F(c)\in \mathbb R^{m+n+2}$. For the convenience of the reader, let us recall two different concepts of condition numbers measuring both the worst case amplification of infinitesimally small relative errors: for the forward conditioning $\kappa_{for}(F)(c)$ one is interested whether small errors $\widetilde c -c$ in the data gives an answer $F(\widetilde c)$ close to $F(c)$.
In contrast, for the backward conditioning one considers $\widetilde y$ close to $F(c)$ and asks whether $\widetilde y$ is the right answer $F(\widetilde c)$ for some $\widetilde c$ close to $c$. However, due to the lack of surjectivity, it could be necessary to project first the perturbed value $\widetilde y$ on the image of $F$, and we might need additional assumptions in order to insure that the value $\mbox{dist}(\widetilde y, F(\mathbb R^{m+n+1}))$ is attained at some $F(\widetilde c)$. Also, in general there might be several such arguments $\widetilde c$ due to the lack of injectivity and we have to find the one closest to $c$.

However, as we see in Theorem~\ref{thm_stability}(a),(b) below, for the real Pad\'e map the situation is much less involved: for instance, we show that $F$ is injective in a neighborhood of a point of continuity. Also, since $\| c\| = \| F(c) \|=1$ by \eqref{scaling} and \eqref{def_Pade_map2}, we may replace relative errors by absolute errors in the definition of conditioning, which make our formulas more readable.


\begin{theorem}\label{thm_stability}
   Suppose that $F$ is continuous in a neighborhood of $c\in \mathbb R^{m+n+1}$, that \eqref{scaling} holds, and that the matrix $T$  of (\ref{homogeneous_system2}) is defined by $c$ and $Q$  of (\ref{def_Sylvester}) by $F(c)$. Then the following statements hold.
   \begin{description}
   \item[(a)]
      There exists $\mathcal U \subset \mathcal R^{m+n+1}$, a neighborhood of $c$, and $\mathcal V \subset \mathbb S^{m+n+2}:=\{ y \in \mathcal R^{m+n+2}: \| y \| = 1 \}$, a relative neighborhood of $F(c)$ on the unit sphere $\mathbb S^{m+n+2}$ such that the restriction $F:\mathcal U \to \mathcal V$ is a diffeomorphism, and we have the Jacobian  $J_{F}(c) = T^\dagger Q$.
   \item[(b)]
      For any $\widetilde y$ sufficiently close to $F(c)$, the projection of $\widetilde y$ onto $F(\mathbb R^{m+n+1})$ exists, and is given by $\widetilde y/\| \widetilde y \|\in \mathcal V$.
   \item[(c)]
      The forward condition number is given by
       \begin{equation} \label{stability_forward}
          \kappa_{for}(F)(c):=\limsup_{\widetilde c \to c}
          \frac{\| F(\widetilde c) - F(c) \|}{\| \widetilde c - c \|}
          = \| T^\dagger Q \|.
       \end{equation}
   \item[(d)]
      The backward condition number is given by
   \begin{equation} \label{stability_backward}
       \kappa_{back}(F)(c):=\limsup_{\widetilde y \to F(c)}
       \frac{\inf \{ \| \widetilde c  - c\| : F(\widetilde c)=\widetilde y/\| \widetilde y\|\} }
       {\| \widetilde y - F(c) \|}
       = \| J_F(c)^\dagger \| =  \| Q^{-1} T \|.
   \end{equation}
   \end{description}
\end{theorem}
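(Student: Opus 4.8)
The plan is to derive all four parts from a single local study of $F$ near $c$, organized around the identity $\widetilde T(\widetilde c)\,F(\widetilde c)=0$, which holds for every $\widetilde c$ in a neighbourhood of $c$ (here $\widetilde T(\widetilde c)$ is the matrix \eqref{homogeneous_system2} built from $\widetilde c$). I would begin by upgrading continuity of $F$ to smoothness. Since $F$ is continuous near $c$, Theorem~\ref{thm_continuity} shows every nearby $\widetilde c$ has a nondegenerate approximant, and a short algebraic argument — if $\dim\Ker\widetilde C\ge 2$ one obtains two denominator solutions whose quotient with the reduced $q$ is a polynomial of positive degree, contradicting defect zero and coprimality — shows $\widetilde C$, hence $\widetilde T(\widetilde c)$, has full row rank $m+n+1$. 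Thus $\Ker\widetilde T(\widetilde c)$ is one–dimensional, spanned by $F(\widetilde c)$. As $\widetilde c\mapsto\widetilde T(\widetilde c)$ is affine, the orthogonal projector $P(\widetilde c)=I-\widetilde T(\widetilde c)^{\ast}(\widetilde T(\widetilde c)\widetilde T(\widetilde c)^{\ast})^{-1}\widetilde T(\widetilde c)$ onto $\Ker\widetilde T(\widetilde c)$ is smooth near $c$; since \eqref{def_Pade_map2} only fixes norm and the sign of $q(0)$, and the relevant coordinate of $P(\widetilde c)F(c)$ stays positive near $c$, one has $F(\widetilde c)=P(\widetilde c)F(c)/\|P(\widetilde c)F(c)\|$, which is smooth. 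Differentiating $\widetilde T(\widetilde c)F(\widetilde c)=0$ at $c$ in a direction $\delta c$ gives $L(\delta c)F(c)+T\,J_F(c)\,\delta c=0$, where $L(\delta c)$ is the differential of $\widetilde T$; only the entries $-c_j$ vary, so $L(\delta c)F(c)=-Q\,\delta c$ once one observes that the truncated convolution of $q$ with $\delta c$ is symmetric in its two arguments and reproduces the matrix $Q$ of \eqref{def_Sylvester}. Hence $T\,J_F(c)=Q$, so $J_F(c)=T^{\dagger}Q+F(c)v^{\ast}$ for some $v$ (because $\Ker T=\mathrm{span}\,F(c)$); differentiating $\|F\|^{2}\equiv 1$ gives $F(c)^{\ast}J_F(c)=0$, and $F(c)^{\ast}T^{\dagger}=(TF(c))^{\ast}(TT^{\ast})^{-1}=0$, forcing $v=0$ and $J_F(c)=T^{\dagger}Q$.

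For part (a) it remains to invoke the inverse function theorem. Because $\|F\|\equiv 1$, $F$ maps into $\mathbb S^{m+n+2}$, and $F(c)^{\ast}J_F(c)=0$ says $\mathrm{range}\,J_F(c)\subseteq F(c)^{\perp}=T_{F(c)}\mathbb S^{m+n+2}$. The matrix $Q$ is lower triangular with diagonal entry $q(0)>0$, hence invertible, and $T^{\dagger}$ is injective, so $J_F(c)=T^{\dagger}Q$ is injective; by a dimension count it is an isomorphism onto $T_{F(c)}\mathbb S^{m+n+2}$, and the inverse function theorem yields neighbourhoods $\mathcal U$ of $c$ and $\mathcal V$ of $F(c)$ in $\mathbb S^{m+n+2}$ with $F:\mathcal U\to\mathcal V$ a diffeomorphism. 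Part (b) is then elementary: every value of $F$ lies on the unit sphere, so for $\widetilde y\neq 0$ any element of $F(\mathbb R^{m+n+1})$ is at distance at least $\bigl|\,\|\widetilde y\|-1\,\bigr|$ from $\widetilde y$, attained uniquely by $\widetilde y/\|\widetilde y\|$, which lies in $\mathcal V\subseteq F(\mathbb R^{m+n+1})$ when $\widetilde y$ is close to $F(c)$.

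Parts (c) and (d) follow from (a) and (b). For (c), differentiability of $F$ at $c$ gives $\limsup_{\widetilde c\to c}\|F(\widetilde c)-F(c)\|/\|\widetilde c-c\|=\|J_F(c)\|=\|T^{\dagger}Q\|$, with relative errors equal to absolute ones since $\|c\|=\|F(c)\|=1$. For (d), by (a),(b) the infimum in \eqref{stability_backward} is, for $\widetilde y$ near $F(c)$, attained at the unique preimage $G(\widetilde y):=(F|_{\mathcal U})^{-1}(\widetilde y/\|\widetilde y\|)$, since any other preimage lies outside $\mathcal U$ and thus stays a fixed distance from $c$. The map $G$ is differentiable at $F(c)$: the radial projection $\widetilde y\mapsto\widetilde y/\|\widetilde y\|$ has differential the orthogonal projector onto $F(c)^{\perp}$ at $F(c)$, while $(F|_{\mathcal U})^{-1}$ has differential there equal to $J_F(c)^{\dagger}$ — which inverts $J_F(c)$ on its range $F(c)^{\perp}$ and annihilates $F(c)$ — so $\kappa_{back}(F)(c)=\|J_F(c)^{\dagger}\|$. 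Finally $J_F(c)^{\dagger}=(T^{\dagger}Q)^{\dagger}=Q^{-1}T$: indeed $Q^{-1}T$ is a left inverse of $T^{\dagger}Q$ because $TT^{\dagger}=I$ and $Q$ is invertible, while $(T^{\dagger}Q)(Q^{-1}T)=T^{\dagger}T$ is the orthogonal projector onto $\mathrm{range}(T^{\dagger}Q)=\mathrm{range}(T^{\ast})$, which characterizes $Q^{-1}T$ as the Moore–Penrose pseudoinverse.

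I expect the smoothness step to be the main obstacle: one must be sure that nondegeneracy is an open condition and that $\Ker\widetilde T(\widetilde c)$ remains one–dimensional and varies smoothly with $\widetilde c$, so that $F$ is locally the smooth normalised projector $P(\widetilde c)F(c)/\|P(\widetilde c)F(c)\|$. Once that is in place, the Jacobian formula, the inverse function theorem, and both condition–number identities are linear algebra built on the three facts $TT^{\dagger}=I$, $F(c)\in\Ker T$, and $\det Q=q(0)^{m+n+1}\neq 0$.
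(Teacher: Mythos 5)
Your proof is correct, but it takes a genuinely different route from the paper's. The paper works ``backwards'': it introduces the explicit local inverse $G$ (mapping a coefficient vector $\widetilde y$ to the Taylor coefficients of $\widetilde p/\widetilde q$), computes $J_G(F(c))=Q^{-1}T$ directly from the algebraic identity $Q(\widetilde q)\,G(\widetilde y)=-T_0(\widetilde c)\,\mbox{vec}(\widetilde q)$, and then obtains $F$ locally, together with $J_F(c)=T^\dagger Q$, by applying the Implicit Function Theorem to $H(\widetilde y,\widetilde c)=(G(\widetilde y)-\widetilde c,\ \widetilde y^t\widetilde y-1)$; part (d) then reduces to $\kappa_{back}(F)(c)=\kappa_{for}(G)(F(c))=\|J_G(F(c))\|$ with no pseudoinverse computation needed. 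You instead work ``forwards'': you realize $F(\widetilde c)$ as the normalized orthogonal projection of $F(c)$ onto the smoothly varying one-dimensional kernel of $T(\widetilde c)$, which gives smoothness without the IFT, and you extract $J_F(c)=T^\dagger Q$ by implicit differentiation of $T(\widetilde c)F(\widetilde c)=0$ together with the constraint $\|F\|\equiv 1$ — the identification $L(\delta c)F(c)=-Q\,\delta c$ via the symmetry of the truncated convolution is exactly the reason $Q$ appears, and is arguably more transparent here than in the paper. The price you pay is in part (d), where you must verify $(T^\dagger Q)^\dagger=Q^{-1}T$ through the Moore--Penrose axioms and argue that the chain rule through the radial projection and $(F|_{\mathcal U})^{-1}$ produces precisely $J_F(c)^\dagger$; both steps are carried out correctly (the key facts being $TT^\dagger=I$, $\mathrm{range}(T^\dagger Q)=\Ker(T)^\perp=F(c)^\perp$, and the Hermiticity of $T^\dagger T$). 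Your preliminary claim that nondegeneracy forces $\dim\Ker C=1$, hence full row rank of $T$, is the classical block-structure argument; the paper gets the same conclusion more quickly from its Lemma on $S=QT$ having full row rank. Both routes are rigorous; yours is more self-contained on the $F$ side, while the paper's has the advantage that the map $G$ and its Jacobian $Q^{-1}T$ are needed anyway as the natural carrier of the backward condition number.
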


We know from \eqref{scaling} and \eqref{def_Pade_map2} (see also Lemma~\ref{lem_T_C} below) that both matrices $T$ and $Q$ have a norm not larger than $\sqrt{m+n+2}$. Thus we learn from Theorem~\ref{thm_stability}(c) that the real Pad\'e map is forward (backward)  well-conditioned at $c$ provided that the smallest singular value of $T$ (and of $Q$, respectively) is not too small.
It is shown in Lemma~\ref{lem_T_C} below that the smallest singular values of $T$ and $C$ are of the same magnitude. Thus condition {\bf (P2)} insures that the real Pad\'e map is forward well-conditioned.

In our proof of Theorem~\ref{thm_stability}(c) we exploit a well-known formula for $\kappa_{for}(F)(c)$ in terms of the Jacobian of $F$. To our knowledge, similar formulas for $\kappa_{back}(F)(c)$ in terms of the pseudoinverse of the Jacobian have not been established before in the literature. The occurrence of a sub-matrix of $Q$ in the backward conditioning of the Pad\'e denominator map has been noticed before by S.\ G\"uttel (personal communication).

\subsection{Well-conditioned rational functions and spurious poles}
Let us now turn to the subject of spurious poles, which in the present paper we study for general rational functions and not only for Pad\'e approximants. It will be shown in Lemma~\ref{lem_degeneracy} below that $p/q$ is nondegenerate if and only if the corresponding matrix $S$ has full row rank. In a numerical setting, rank deficiency is typically excluded in requiring a condition number of modest size. In what follows, we will refer to rational functions as well-conditioned if the corresponding matrix $S$ has a modest condition number. As we show in the next theorem, for well-conditioned rational functions we are able to control the occurrence both of Froissart doublets and of small residuals.
We refer to \cite{BeLa} and Lemma~\ref{ngcd} below for other known results on Froissart doublets but, to our knowledge, no such result has been published before for residuals.

In the statement 
below we will make use of the uniform chordal metric in the set $\mathcal M_K$ of functions meromorphic in some compact $K\subset \mathbb C$ being defined by
\begin{equation} \label{chordal}
     \chi_K(r,\widetilde r) = \max_{z\in K} \chi(r(z),\widetilde r(z)) , \quad
     \chi(a,b)=\frac{|a-b|}{\sqrt{1+|a|^2}\sqrt{1+|b|^2}} .
\end{equation}
Such a metric is 
useful to study questions of uniform convergence for rational or meromorphic functions since such functions are continuous in $K$ with respect to the chordal metric. A different 
uniform metric has been also employed in \cite{WW} for measuring the distance of two rational functions for the continuity of the Pad\'e map. We will discuss the link with the distance of two coefficient vectors in more detail in \S\ref{sec3}. Notice that the next statement does not only cover Froissart doublets and small residuals of $r=p/q$ but also of rational functions $\widetilde r=\widetilde p/\widetilde q$ close to $r$, as those constructed in \cite{GGT12} where small leading coefficients in $p$ or $q$ are replaced by $0$.

\begin{theorem}\label{thm_rational}
   Let the two polynomials $p$ of degree $\leq m$ and $q$ of degree $\leq n$ be such that $r=p/q$ is nondegenerate. Then the following statements hold for the matrix $S=S(q,-p)$.
   \begin{description}
   \item[(a)] For any meromorphic function $\widetilde r\in \mathcal M_{\mathbb D}$ with $\chi_{\mathbb D}(r,\widetilde r)\leq 1/3$, the
       Euclidian
       distance of any
       pair
       of zeros and poles of $\widetilde r$ in the unit disk is bounded below by $1/(3 \sqrt{2} (m+n+1)^{3/2} \, \kappa(S))$.
   \item[(b)] For any rational function $\widetilde r\in \mathcal R_{m,n}$ with $2 \, (m+n+1)^{2} \kappa(S)^2 \chi_{\mathbb D}(r,\widetilde r)\leq 1/3$, the modulus of any residual of a simple pole in the unit disk of $\widetilde r$ is bounded below by $1/((2(m+n+1))^{3/2} \kappa(S))$.
   \end{description}
\end{theorem}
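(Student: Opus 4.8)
The plan is to express both the closeness of zeros/poles and the smallness of a residual in terms of a linear-algebraic condition on the coefficient vector $(\mbox{vec}(q),-\mbox{vec}(p))$, and then invoke the bound $\sigma_{\min}(S)=\|S^\dagger\|^{-1}\geq \|S\|/\kappa(S)\gtrsim 1/\kappa(S)$ coming from nondegeneracy (Lemma~\ref{lem_degeneracy}) together with $\|S\|\leq\sqrt{m+n+1}$ from \eqref{def_Pade_map2}. The key observation is that $S=S(q,-p)$ acts on a vector $\mbox{vec}(u)$ of length $m+n+2$ (split as the coefficient vector of a polynomial pair $(a,b)$ with $\deg a\leq n+1$, $\deg b\leq m+1$, or more naturally on a single polynomial of degree $\leq m+n+1$) by returning the coefficient vector of $q\cdot a + (-p)\cdot b$; in particular, if $g$ is a common divisor of some perturbed pair $(\widetilde q,\widetilde p)$, say $\widetilde q = g\,a$, $\widetilde p = g\,b$, then $q\widetilde q - (-p)\cdots$ — more precisely the Bézout-type combination $\widetilde q\, p - \widetilde p\, q = g(a p - b q)$ shows that when $\widetilde r$ and $r$ are chordally close, $S$ applied to a suitable normalized vector is small. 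I would make this precise by first translating $\chi_{\mathbb D}(r,\widetilde r)$ small into: the coefficient vector of $p\widetilde q - q\widetilde p$ (a polynomial of degree $\leq m+n+1$, which vanishes iff $r\equiv\widetilde r$) is small in a controlled way.

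For part (a): suppose $\widetilde r$ has a zero $\alpha$ and a pole $\beta$ in $\mathbb D$ with $|\alpha-\beta|$ small. Chordal closeness $\chi_{\mathbb D}(r,\widetilde r)\leq 1/3$ forces $r$ to be close to $0$ near $\alpha$ and close to $\infty$ near $\beta$, hence (since $\alpha\approx\beta$) there is a point where $|r|$ is both small and large — unless $p$ and $q$ simultaneously have near-zeros at a common nearby point. One extracts from $\widetilde r$ a near-common-factor $(z-\gamma)$ for $p$ and $q$: writing $p(z)\approx (z-\gamma)\hat p(z)$, $q(z)\approx(z-\gamma)\hat q(z)$ up to a small residue, one gets that $S$ applied to the vector built from $\hat q$ and $-\hat p$ (padded appropriately) is small relative to its norm, i.e. $\sigma_{\min}(S)\lesssim |\alpha-\beta|\cdot(\text{stuff})$. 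Quantitatively, the chordal metric gives $|p(z)|,|q(z)|$ comparable along $\mathbb D$ up to $\chi_{\mathbb D}$, and a Lagrange/Hermite interpolation estimate at $\alpha$ and $\beta$ turns $|\alpha-\beta|$ into the size of $S v$ for an explicit $v$ of norm $\sim 1$; the constants $1/3$, $\sqrt 2$, and $(m+n+1)^{3/2}$ come from bounding polynomial values and derivatives on $\mathbb D$ by their coefficient norms (a factor $\sqrt{m+n+1}$ per evaluation, another for the divided difference).

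For part (b): let $\widetilde r=\widetilde p/\widetilde q\in\mathcal R_{m,n}$ have a simple pole at $\beta\in\mathbb D$ with residue $a$. Then $\widetilde q(\beta)=0$ and $a=\widetilde p(\beta)/\widetilde q'(\beta)$; smallness of $|a|$ means $|\widetilde p(\beta)|$ is small relative to $|\widetilde q'(\beta)|$. Chordal closeness controls $\widetilde q,\widetilde p$ in terms of $q,p$, so $|p(\beta)|\lesssim |a|\,|\widetilde q'(\beta)| + (m+n+1)^{?}\kappa(S)^2\chi_{\mathbb D}$, and one also gets $|q(\beta)|$ small (since $\widetilde q(\beta)=0$). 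Thus both $p$ and $q$ nearly vanish at $\beta$, so again $(z-\beta)$ is a near-common factor: deflating gives a unit vector $v$ with $\|Sv\|$ bounded by the right-hand side, forcing $\sigma_{\min}(S)$ to be at most that, i.e. $1/\kappa(S)\lesssim\sigma_{\min}(S)\lesssim |a|\cdot(\text{poly})$ after using $\|S\|\sim 1$; rearranging yields the stated lower bound $1/((2(m+n+1))^{3/2}\kappa(S))$ on $|a|$, with the hypothesis $2(m+n+1)^2\kappa(S)^2\chi_{\mathbb D}(r,\widetilde r)\leq 1/3$ exactly what is needed to absorb the perturbation term.

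I expect the main obstacle to be the translation step: making rigorous and quantitatively sharp the passage from ``$\widetilde r$ is chordally close to $r$ and has a Froissart doublet / small residual'' to ``$S$ has a small singular value,'' i.e. constructing the explicit near-null vector $v$ of $S$ with the claimed norm bounds and tracking all powers of $m+n+1$. The chordal metric is awkward to work with directly, so one needs a lemma (analogous to the comparison with the coefficient-vector distance promised in \S\ref{sec3}) that converts $\chi_{\mathbb D}(r,\widetilde r)$ into a bound on $\|p\widetilde q-q\widetilde p\|$ divided by suitable normalizing factors; once that is in hand, the deflation/interpolation arguments in (a) and (b) are essentially linear algebra plus standard polynomial inequalities on $\mathbb D$.
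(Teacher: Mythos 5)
Your plan is to construct an explicit near-null vector of $S$ from the perturbed data (a ``deflation'' argument in the spirit of numerical GCD), so that $\sigma_{\min}(S)$ is bounded above by the Froissart distance or the residual modulus, and then use $\kappa(S)\sigma_{\min}(S)=\|S\|\lesssim 1$. This is a genuinely different route from the paper's. The paper never constructs a near-null vector; instead it bounds the \emph{spherical derivative} $r^{\#}(z)=|r'(z)|/(1+|r(z)|^2)$ uniformly on $\mathbb D$. The central technical inequality (eq.~\eqref{distance4}, which follows from the row identity $(1,z,\ldots,z^{m+n})S=(-q(z),\ldots,-z^m q(z),p(z),\ldots,z^n p(z))$) is $1\leq\|S^\dagger\|\sqrt{|p(z)|^2+|q(z)|^2}$; combined with $\|(p'(z),q'(z))\|\leq (m+n+1)^{3/2}$ this gives $\max_{\mathbb D}r^{\#}\leq\sqrt 2(m+n+1)^{3/2}\kappa(S)$. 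Part~(a) then follows from the chordal triangle inequality plus the pathwise estimate $\chi(r(\sigma),r(\tau))\leq|\sigma-\tau|\,\max_{\mathbb D}r^{\#}$; part~(b) follows because $1/|\widetilde\alpha_0|=\widetilde r^{\#}(\widetilde z_0)$ at a simple pole, together with the stability estimate $\kappa(\widetilde S)\leq 2\kappa(S)$ from Lemma~\ref{lem_conditioning}.

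There are two genuine gaps in your proposal. First, part~(a) is stated for \emph{meromorphic} $\widetilde r\in\mathcal M_{\mathbb D}$, not for rational $\widetilde r$. Your translation step (``the coefficient vector of $p\widetilde q-q\widetilde p$'') presupposes $\widetilde r$ has a coefficient vector, i.e.\ $\widetilde r\in\mathcal R_{m,n}$, and the comparison lemma you invoke (which is Theorem~\ref{thm_distance}) is proved only for $\widetilde r\in\mathcal R_{m,n}$. A coefficient-vector argument as written cannot reach the meromorphic generality; the paper sidesteps this entirely by using only the chordal triangle inequality on $\widetilde r$ and pushing the coefficient-level work onto $r$ alone via $r^{\#}$. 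Second, the central quantitative step you identify as ``the main obstacle'' — producing an explicit unit vector $v$ with $\|Sv\|\lesssim|\alpha-\beta|$ or $\lesssim|a|$, with the right powers of $(m+n+1)$ — is not a detail to be filled in but essentially the entire content of the theorem, and as you note it is not carried out. In the GCD framing (cf.\ Lemma~\ref{ngcd} and \S\ref{sec5.2} of the paper), what one readily obtains is a bound of the form $\epsilon(p,q)\lesssim|\sigma-\tau|$ for zero/pole pairs of \emph{$r$ itself}, not of a nearby $\widetilde r$; transporting this to doublets of $\widetilde r$ while only knowing $\chi_{\mathbb D}(r,\widetilde r)\leq 1/3$ is exactly where your sketch stops. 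The spherical-derivative bound is the missing idea that makes this transport both rigorous and sharp.
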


   Numerical results presented in Example~\ref{exa3} below indicate that both lower bounds of Theorem~\ref{thm_rational} can be approximately attained.
It seems for us that, due to the use of the basis of monomials, the occurrence of the unit disk $\mathbb D$ in Theorem~\ref{thm_rational} is natural. For the case $m=n$ of diagonal rational functions $r,\widetilde r \in \mathcal R_{n,n}$, we could also obtain results outside of the unit disk, by considering the reversed numerator and denominator polynomials (for which $\kappa(S)$ remains unchanged).

Let us finally turn to convergence questions for robust Pad\'e approximants. In \cite[\S 8]{GGT12}, Gonnet, G\"uttel and Trefethen asked whether there are analogues of classical convergence theorems by Stahl and Pommerenke for robust Pad\'e approximants where the absence of spurious poles would enable to obtain not only convergence in capacity but uniform convergence. To be more precise, the authors suggest to compute robust Pad\'e approximants of type $[m_k|n_k]$ for increasing sequences of numbers $m_k,n_k$, where each approximant is computed using a threshold $tol_k$ possibly tending to zero for $k\to \infty$.
Notice that a variable threshold does no longer allow a simple control of spurious poles through our Theorem~\ref{thm_rational}.
But quite often there are only a finite number of distinct robust Pad\'e approximants following for instance a diagonal path $m_k=n_k=k$ if one uses a fixed threshold for all approximants. For instance, the numerical experiments for the exponential function with $tol_k=10^{-14}$ as reported in \cite[Fig.\ 5.1]{GGT12} tell us that there are only $8$ distinct robust Pad\'e approximants on the diagonal, since all approximants of type $[n|n]$ for $n \geq 8$ reduce to the one for $n=7$.

This vague observation can be made more explicit for
  Stieltjes functions $f$, since here the matrix $C$ has a condition number which grows quickly with $n$, see \cite{beck} for results on the condition number of positive definite Hankel matrices. For general functions $f$, we have the following result.

\begin{theorem}\label{thm_convergence}
   Let $r=p/q\in \mathcal R_{m,n}$ be nondegenerate and
   $\widetilde r=\widetilde p/\widetilde q\in \mathcal R_{m-1,n-1}$.
   Then $2 \, \chi_{\mathbb D}(r,\widetilde r) \kappa(S)^2 \geq (m+n+1)^{-2}$ for the matrix $S=S(q,-p)$.
\end{theorem}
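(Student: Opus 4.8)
The plan is to convert chordal closeness of $r$ and $\widetilde r$ on $\mathbb D$ into an upper bound for $\|Su\|$ for a suitable vector $u$, and to combine this with a lower bound for $\|Su\|$ coming from the structure of $S=S(q,-p)$. Fix a representation $\widetilde r=\widetilde p/\widetilde q$ with $\deg\widetilde p\le m-1$, $\deg\widetilde q\le n-1$ and $\widetilde q\not\equiv 0$, and let $u\in\C^{m+n+2}$ be obtained by stacking the $m+1$ coefficients of $\widetilde p$ (viewed as a polynomial of degree $\le m$, so with a zero in the last, $(m+1)$-st, slot) on top of the $n+1$ coefficients of $\widetilde q$ (with a zero in the last, $(m+n+2)$-nd, slot); note $u\neq0$. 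Similarly let $v_0\in\C^{m+n+2}$ be the stacking of $\mbox{vec}(p)$ on top of $\mbox{vec}(q)$. The banded (convolution) structure of $S$ shows that, applied to the stacking of $\mbox{vec}(b)$ (with $\deg b\le m$) on top of $\mbox{vec}(a)$ (with $\deg a\le n$), it returns $\mbox{vec}(qb-pa)$; in particular $Sv_0=\mbox{vec}(qp-pq)=0$ and $Su=\mbox{vec}(q\widetilde p-p\widetilde q)$, and the argument rests on the polynomial identity $q\widetilde p-p\widetilde q=q\widetilde q\,(\widetilde r-r)$.

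For the upper bound I would use the Parseval identity $\|\mbox{vec}(P)\|^2=\frac{1}{2\pi}\int_0^{2\pi}|P(e^{i\theta})|^2\,d\theta$, so $\|\mbox{vec}(P)\|\le\max_{|z|=1}|P(z)|$, for any polynomial $P$. For $|z|=1$, the identity $q\widetilde p-p\widetilde q=q\widetilde q(\widetilde r-r)$ together with $|q(z)|^2(1+|r(z)|^2)=|q(z)|^2+|p(z)|^2$ gives $|(q\widetilde p-p\widetilde q)(z)|=\chi(r(z),\widetilde r(z))\,\sqrt{|q(z)|^2+|p(z)|^2}\,\sqrt{|\widetilde q(z)|^2+|\widetilde p(z)|^2}$ (at poles of $r$ or $\widetilde r$ on the circle both sides are read as limits, i.e.\ via continuity on the Riemann sphere), which is at most $\chi_{\mathbb D}(r,\widetilde r)$ times the product of those two square roots. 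Estimating $|p(z)|^2\le(m+1)\|\mbox{vec}(p)\|^2$ and $|q(z)|^2\le(n+1)\|\mbox{vec}(q)\|^2$ on the circle by Cauchy--Schwarz on the coefficients, using that $\widetilde p$ and $\widetilde q$ have at most $m$ and $n$ coefficients respectively, and that $\|\mbox{vec}(p)\|$ and $\|\mbox{vec}(q)\|$ are norms of columns of $S$ and hence $\le\sigma_1(S)$, one arrives at $\|Su\|\le\sqrt 2\,(m+n+1)\,\sigma_1(S)\,\|u\|\,\chi_{\mathbb D}(r,\widetilde r)$.

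For the lower bound, nondegeneracy of $r=p/q$ means, by Lemma~\ref{lem_degeneracy}, that $S$ has full row rank $m+n+1$, so $\Ker S$ is one-dimensional; since $Sv_0=0$ and $v_0\neq0$, it equals $\mbox{span}\{v_0\}$, whence $\|Su\|\ge\sigma_{m+n+1}(S)\,\mbox{dist}(u,\mbox{span}\{v_0\})$. To bound $\mbox{dist}(u,\mbox{span}\{v_0\})=\|u\|\sin\theta$ (with $\theta$ the angle between $u$ and $v_0$) from below, observe that in the two slots, the $(m+1)$-st and the $(m+n+2)$-nd, where $v_0$ carries the leading coefficients $p_m$ and $q_n$, the vector $u$ has zeros, precisely because $\deg\widetilde p\le m-1$ and $\deg\widetilde q\le n-1$. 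Splitting $v_0$ into its orthogonal projection onto those two coordinate directions plus a remainder orthogonal to the support of $u$ gives $|\langle u,v_0\rangle|\le\|u\|\,\sqrt{\|v_0\|^2-|p_m|^2-|q_n|^2}$, hence $\sin^2\theta\ge(|p_m|^2+|q_n|^2)/\|v_0\|^2$. Now $\sqrt{|p_m|^2+|q_n|^2}$ is exactly the Euclidean norm of the last row of $S$, and therefore $\ge\sigma_{m+n+1}(S)$, while $\|v_0\|^2=\|\mbox{vec}(p)\|^2+\|\mbox{vec}(q)\|^2\le 2\,\sigma_1(S)^2$. Combining, $\mbox{dist}(u,\Ker S)\ge\|u\|\,\sigma_{m+n+1}(S)/(\sqrt 2\,\sigma_1(S))$, so $\|Su\|\ge\|u\|\,\sigma_{m+n+1}(S)^2/(\sqrt 2\,\sigma_1(S))=\|u\|\,\sigma_1(S)/(\sqrt 2\,\kappa(S)^2)$.

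Comparing the two estimates for $\|Su\|$ and cancelling the positive factor $\sigma_1(S)\|u\|$ gives $1\le 2\,(m+n+1)\,\kappa(S)^2\,\chi_{\mathbb D}(r,\widetilde r)$, which is actually slightly stronger than the asserted bound since $m+n+1\le(m+n+1)^2$. The main obstacle is the upper bound: one has to recognise that the vector built from $(\widetilde p,\widetilde q)$ is the right input for $S$, exploit the identity $q\widetilde p-p\widetilde q=q\widetilde q(\widetilde r-r)$ together with $|q|^2(1+|r|^2)=|q|^2+|p|^2$ to trade the possibly singular factor $\widetilde r-r$ for a quantity controlled by $\chi_{\mathbb D}$ and bounded on $|z|=1$, and deal with poles of $r,\widetilde r$ on the unit circle, which is harmless since both sides of that identity extend continuously to the Riemann sphere. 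The identification $\Ker S=\mbox{span}\{v_0\}$ and the entrance of the leading coefficients $p_m,q_n$ through the last row of $S$ (not both zero, since the defect is zero) form the other ingredient, but they are routine once noticed.
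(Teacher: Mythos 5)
Your proof is correct, and it takes a genuinely different route from the paper's. The paper deduces the theorem as a two-line corollary of two earlier results: the second part of Lemma~\ref{lem_conditioning}, which uses the Eckart--Young theorem on the (rank-deficient) Sylvester-like matrix $\widetilde S$ of the degenerate function $\widetilde r\in\mathcal R_{m-1,n-1}\subset\mathcal R_{m,n}$ to get $\sqrt{2(m+n+1)}\,d(r,\widetilde r)\,\kappa(S)\geq 1$, and the left-hand inequality of Theorem~\ref{thm_distance}, which converts $d(r,\widetilde r)$ into $\chi_{\mathbb D}(r,\widetilde r)$ at the cost of another factor $\sqrt 2(m+n+1)^{3/2}\kappa(S)$ via a DFT evaluation at roots of unity. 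You instead estimate the single quantity $\|S\,x(\widetilde r)\|$ from both sides: from above by Parseval on the circle and the chordal identity $|q\widetilde p-p\widetilde q|=\chi(r,\widetilde r)\sqrt{|p|^2+|q|^2}\sqrt{|\widetilde p|^2+|\widetilde q|^2}$ (a continuous analogue of the paper's roots-of-unity computation), and from below by combining $\sigma_{m+n+1}(S)$ with the angle between $x(\widetilde r)$ and $\Ker S=\mathrm{span}\{x(r)\}$; the degree drop of $\widetilde r$ enters through the two zero entries of $x(\widetilde r)$ in the leading-coefficient slots, matched against the last row $(0,\dots,0,q_n,0,\dots,0,-p_m)$ of $S$, rather than through Eckart--Young. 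All the individual steps check out (the column norms of $S$ bound $\|\mbox{vec}(p)\|,\|\mbox{vec}(q)\|$ by $\sigma_1(S)$, the row norm bounds $\sqrt{|p_m|^2+|q_n|^2}$ below by $\sigma_{m+n+1}(S)$, and nondegeneracy guarantees $\Ker S$ is exactly the span of $x(r)$). What your approach buys is a sharper constant, $2(m+n+1)\,\chi_{\mathbb D}(r,\widetilde r)\,\kappa(S)^2\geq 1$ versus the paper's $(m+n+1)^2$ --- a modest step toward the improvement the paper explicitly wishes for after the theorem statement, although the factor $\kappa(S)^2$ (which the paper conjectures can be lowered to $\kappa(S)$, cf.\ Corollary~\ref{cor_convergence}) persists in your argument because both your upper and lower bounds each spend one factor of $\kappa(S)$. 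What it costs is self-containedness in the other direction: the paper's route reuses Lemma~\ref{lem_conditioning} and Theorem~\ref{thm_distance}, which are needed elsewhere anyway, whereas yours is a standalone computation.
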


We feel that it should be possible to establish an improved version of Theorem~\ref{thm_convergence} where $\kappa(S)^2$ is replaced by a term of order $\kappa(S)$. Such a result is given in Corollary~\ref{cor_convergence} below at least for the special case where $r,\widetilde r$ are two succeeding Pad\'e approximants on a diagonal.
Notice also that Theorem~\ref{thm_convergence} implies for the rational function $\widetilde r$ of Theorem~\ref{thm_rational}(b) to be nondegenerate.

Roughly speaking, we learn from Theorem~\ref{thm_convergence} that for any function $f$ which can be well approximated by some element of  $\mathcal R_{m-1,n-1}$ with respect to the uniform chordal metric in the unit disk, its $[m|n]$ Pad\'e approximant $r$ either does not have a small approximation error $\chi_{\mathbb D}(f,r)$, or otherwise the number $\kappa(S)$ is necessarily ``large''.
Since we feel that on a computer it is preferable to compute only well-conditioned rational functions, this could lead to an early stopping criterion for computing only Pad\'e approximants of small order. Such a stopping criterion would however require a systematic study of the error of best rational approximants with respect to the uniform chordal metric, which to our knowledge is an open problem, beside the negative result \cite[Theorem~3.1]{Darras}. Another impact of Theorem~\ref{thm_convergence} could be to introduce in the computation of Pad\'e approximants a penalization term taking care of a modest $\kappa(S)$ or some more appropriate estimator, inspired by techniques from inverse problems. But this is far beyond the scope of the present paper.

The remainder of the paper is organized as follows.
  \S\ref{sec_num} contains some numerical
   experiments which confirm our theoretical findings.
In \S\ref{sec2} we give
auxiliary statements and provide a proof of Theorem~\ref{thm_stability} on the conditioning of the real Pad\'e map.
\S\ref{sec3} is devoted to the study of distances of rational functions, we will show in Theorem~\ref{thm_distance} that in some cases the uniform chordal metric is close to forming differences of scaled coefficient vectors. A proof of Theorem~\ref{thm_rational} and Theorem~\ref{thm_convergence} is provided in \S\ref{sec4}. In Section \S\ref{sec5} we report about some previous work on related fields like numerical GCDs, condition number estimators, and look-ahead procedures for computing Pad\'e approximants. 
A summary of our work and concluding remarks can be found in \S\ref{sec6}.

\section{Some numerical experiments}\label{sec_num}
In this section we present examples of subdiagonal Pad\'e approximants ($m=n-1$) for three functions, namely
\begin{equation} \label{exa}
      f_1(z)=\int_{-1}^1 \frac{1}{\sqrt{1-x^2}} \frac{dx}{1-xz} ,
      \quad
      f_2(z) = \exp(z) ,
      \quad
      f_3(z) = \sum_{j=0}^{2N} c_3(j) z^j , \quad c_3={\tt randn}(2N) ,
\end{equation}
the first one a Stieltjes
function analytic in $|z|<1$, and the second (and third) one an entire function with quickly decaying Taylor coefficients (and random coefficients, respectively).
For each $m+1=n=1,...,N$, we first normalize the vector of the first $m+n+1$ Taylor coefficients following \eqref{scaling} by dividing by the norm. Subsequently, we compute the denominator coefficients
using the SVD, the corresponding coefficients of the numerator by multiplying
by a submatrix of $T$, and then normalize following \eqref{def_Pade_map2} by dividing by the norm. It turns out that all 
subdiagonal approximants are nondegenerate,
though there are $2\times 2$ blocks in the Pad\'e table of the even function $f_1$.

We draw in Fig~\ref{fig1}, Fig~\ref{fig2}, and Fig~\ref{fig3} the
 condition number of the four matrices $C$, $T$, $S$ and $Q$, as well as the norm of the two matrices $T^\dagger Q$ and  $Q^{-1} T$ occurring in Theorem~\ref{thm_stability}(c),(d). One observes that always $\kappa(C)$ and $\kappa(T)$ are of the same magnitude, and that $\max \{ \kappa(Q),\kappa(T) \}\lesssim \kappa(S)$. 
 These properties are shown analytically in Lemma~\ref{lem_T_C} below.
 It is also not difficult to establish the inequalities $\| Q^{-1} T \|\lesssim \kappa(Q)$ and $\| T^{\dagger} Q \| \leq \kappa(T)$, but we also observe without proof in our numerical experiments that $\| Q^{-1} T \|\approx \kappa(Q)$ and $\| T^{\dagger} Q \|\approx \kappa(T)$, up to some artifacts for the exponential function and $n \geq 11$ in Fig~\ref{fig2} which we believe are due to rounding errors.

In order to discuss the sharpness of Theorem~\ref{thm_rational}, we also draw the reciprocal values of
\begin{eqnarray*}
   && Froissart = \min \{ | \sigma - \tau |: p(\sigma)=0, q(\tau)=0, |\tau|\leq 1 \},
   \\
   && Residual = \min \{ | \frac{p(\tau)}{q'(\tau)}| : q(\tau)=0, |\tau|\leq 1 \},
\end{eqnarray*}
in case where the $[n-1|n]$ Pad\'e approximant has at least one pole in the unit disk. Below we give some specific comments for each of the three functions.

\begin{figure}[htb]
    \centerline{\includegraphics[width=0.75\textwidth]{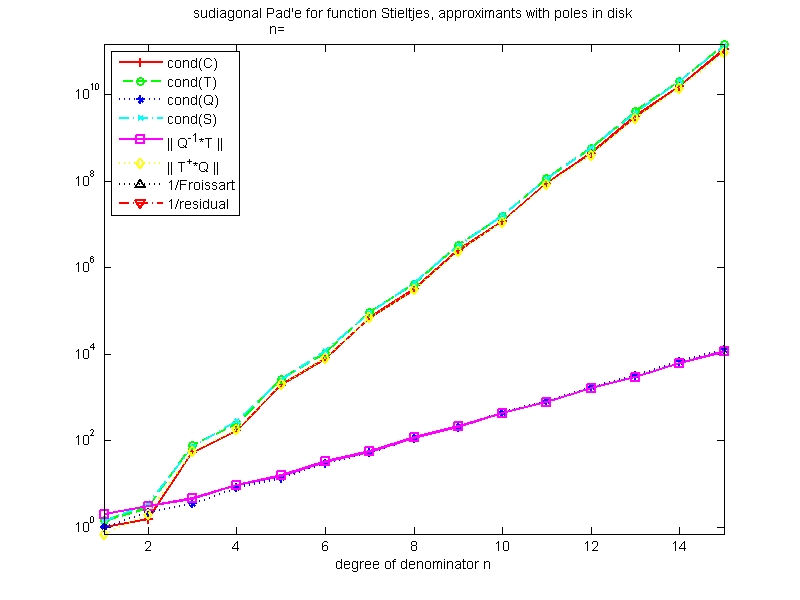}}
  \caption{Condition numbers related to the Stieltjes function $f_1$.}\label{fig1}
\end{figure}

\begin{example}\label{exa1}
   The $[n-1|n]$ Pad\'e approximant for $n=1,...,N=15$ of the Stieltjes function $f_1$ in \eqref{exa} does not have poles in the unit disk, even in presence of rounding errors.
   We observe from Fig.~\ref{fig1} that $\kappa(S)$ and $\kappa(T)$ have the same magnitude, and are growing exponentially in $n$. Also, $\kappa(Q)$ is growing exponentially in $n$, but less quickly.
   \\
   This example clearly shows that $\kappa(S)$ large does not imply the existence of a Froissart doublet or a small residual in the disk.
\end{example}

\begin{figure}[htb]
    \centerline{\includegraphics[width=0.75\textwidth]{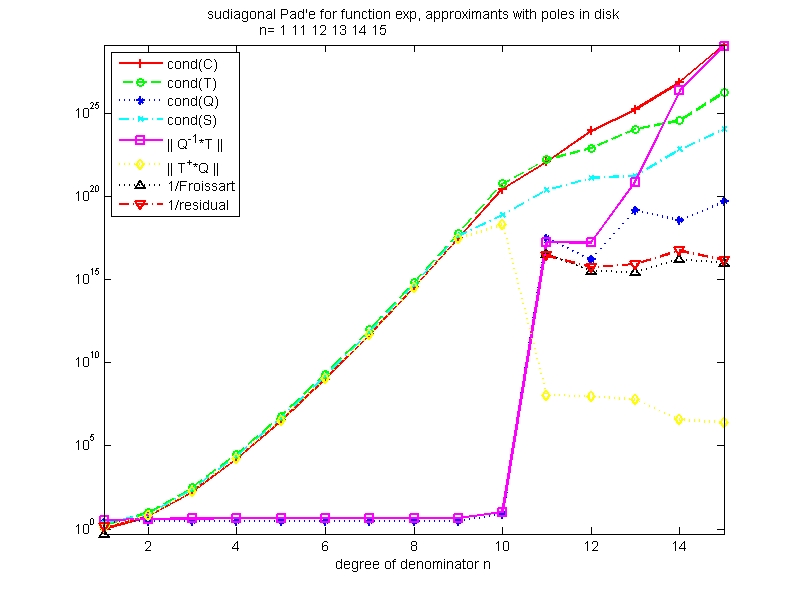}}
  \caption{Condition numbers related to the exponential function $f_2$.}\label{fig2}
\end{figure}

\begin{example}\label{exa2}
   The $[n-1|n]$ Pad\'e approximant for $n=1,...,10$ of the exponential function $f_2$ in \eqref{exa} does not have poles in the (open) unit disk, even in presence of rounding errors. We believe that, due to rounding errors, our $[n-1|n]$ Pad\'e approximants for $n\geq 11$ having poles in the disk  are badly computed. Also, Matlab gives warnings that the condition numbers and norms for $n \geq 11$ are badly computed. We observe from Fig.~\ref{fig2} for $n\leq 10$ that $\kappa(Q)$ is close to $1$, and thus $\kappa(S)$ and $\kappa(T)$ have the same magnitude, which is growing quickly with $n$.
\end{example}

\begin{figure}[htb]
    \centerline{\includegraphics[width=0.75\textwidth]{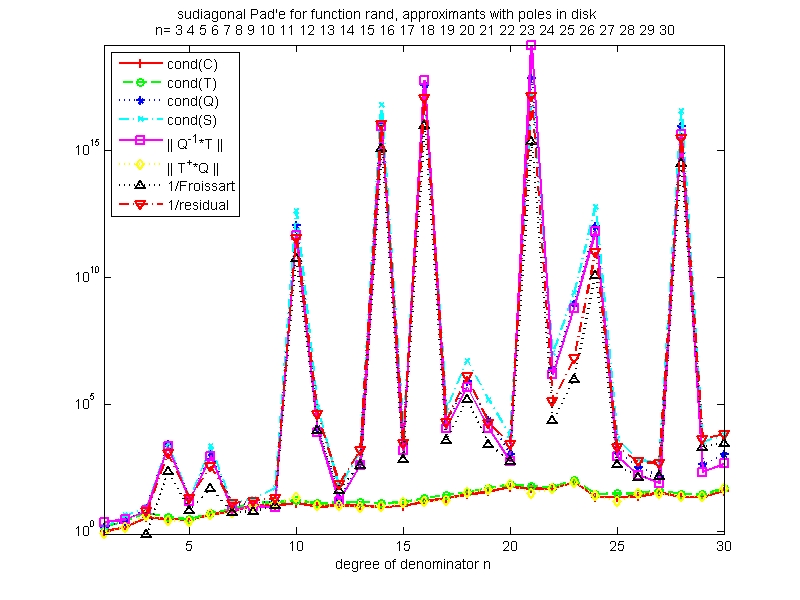}}
  \caption{Condition numbers related to the random function $f_3$.}\label{fig3}
\end{figure}

\begin{example}\label{exa3}
   The numerical results reported in Fig.~\ref{fig3} for the random function $f_3$ in \eqref{exa} for $n=1,2,...,N=30$ depend of course on the realization of the random Taylor coefficients, but for about $10$ realizations we found each time a similar behavior: all approximants are robust since $\kappa(T)$ is always not too far from $1$.
   As a consequence, $\kappa(S)$ and $\kappa(Q)$ have the same magnitude, the dependence on $n$ being quite erratic, in this example between $1$ and $10^{20}$. This shows that there are cases where a Pad\'e approximant is robust but not well-conditioned.
   \\
   Even more striking, in this example the curves for $1/Froissart$ and $1/Residual$ follow quite closely that of $\kappa(S)$, showing that, for this example, Theorem~\ref{thm_rational} is essentially sharp.
\end{example}

In the context of Example~\ref{exa3}, we should also mention the 
recent paper \cite{masc} where, given arbitrary nonzero complex numbers $z_k$ of modulus $\leq 1/3$, the author explicitly gives a function $f$ analytic in $|z|<1$ where the subsequence of $[n_k|n_k]$ diagonal Pad\'e approximants, $n_k=2^k-2$, are robust (with the condition number of $C$ being bounded by $5$) but have a (spurious) pole at $z_k$. His function $f$ is resulting from a smart modification of Gammel's counterexample \cite[\S 6.7]{BGM}, where the $[n_k|n_k]$ approximant coincides with the $[n_k|1]$ Pad\'e approximant, leading to a rich block structure in the Pad\'e table. It can be shown that in this case both $Q$ and $S$ have a condition number being of the same magnitude as $|z_k|^{-2n_k}$, hence these approximants are not well-conditioned.

\section{Conditioning of the Pad\'e map and proof of Theorem~\ref{thm_stability}}\label{sec2}

The aim of this section is to analyze the conditioning of the real Pad\'e map and in particular to provide a proof of Theorem~\ref{thm_stability}.
We start however with two technical statements, the first one relating nondegeneracy 
to the rank of the matrix $S$, and the second relating the smallest and largest singular values of the matrices $C,T,Q$ and $S$.

\begin{lemma}\label{lem_degeneracy}
       Let $p,q$ be two polynomials, $p$ of degree $\leq m$ and $q$ of degree $\leq n$. Then $p/q$ is nondegenerate if and only if the matrix $S$ defined in \eqref{def_Sylvester} has full (row) rank $m+n+1$.
\end{lemma}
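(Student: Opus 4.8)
The plan is to connect the row rank of $S=S(q,-p)\in\mathbb C^{(m+n+1)\times(m+n+2)}$ with the resultant-type interpretation of the Sylvester matrix. Recall that $S$ acts on coefficient vectors: if $u$ has degree $\le n$ and $v$ has degree $\le m$, then the product $S^t$ applied to $(\mathrm{vec}(u),\mathrm{vec}(v))^t$ produces the coefficient vector of $q\,u - p\,v$, a polynomial of degree $\le m+n$, i.e.\ a vector in $\mathbb C^{m+n+1}$. (This is exactly the way the classical Sylvester matrix encodes the map $(u,v)\mapsto qu-pv$; here we have one extra row because the target is degree $\le m+n$, and the two extra columns allow $u$ up to degree $n$ and $v$ up to degree $m$ rather than the usual $n-1$ and $m-1$.) So $S$ fails to have full row rank $m+n+1$ exactly when the columns of $S^t$ — equivalently the polynomials $qu-pv$ as $(u,v)$ ranges over degrees $(\le n,\le m)$ — fail to span all of $\mathbb C^{m+n+1}$, i.e.\ there is a nonzero linear functional annihilating every $qu-pv$.

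First I would set up this correspondence precisely and show the two implications. For the ``nondegenerate $\Rightarrow$ full row rank'' direction: suppose $p/q$ is nondegenerate, so $\gcd(p,q)=1$ and $\min\{m-\deg p, n-\deg q\}=0$. I want to produce, for every target polynomial $w$ of degree $\le m+n$, a pair $(u,v)$ with $\deg u\le n$, $\deg v\le m$ and $qu-pv=w$. Since $\gcd(p,q)=1$, by Bézout there exist polynomials $a,b$ with $qa-pb=1$, hence $q(aw)-p(bw)=w$; the issue is only degree control. Using the defect-zero hypothesis (so either $\deg q=n$ or $\deg p=m$) together with division with remainder — reduce $aw$ modulo $p$ (if $\deg p=m$) or $bw$ modulo $q$ (if $\deg q=n$) — one brings $(u,v)$ into the required degree box while keeping $qu-pv=w$; here one checks the degree bookkeeping, using $\deg(qu)\le m+n$ to bound the complementary term. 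This shows the map is onto $\mathbb C^{m+n+1}$, so $S$ has full row rank.

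For the converse I would argue contrapositively: if $p/q$ is degenerate, exhibit a nonzero vector in the left kernel of $S$, equivalently show the map $(u,v)\mapsto qu-pv$ is not onto. If $\gcd(p,q)=g$ is nonconstant, then every $qu-pv$ is divisible by $g$, so the image lies in a proper subspace (the multiples of $g$ of degree $\le m+n$), giving row-rank deficiency. If instead $\gcd(p,q)=1$ but the defect is positive, i.e.\ $\deg p\le m-1$ and $\deg q\le n-1$, then $qu-pv$ has degree $\le m+n-1$ for all admissible $(u,v)$, so the coefficient of $z^{m+n}$ always vanishes — again a proper subspace. Either way $S$ is rank-deficient, completing the equivalence.

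The main obstacle I anticipate is the degree-tracking in the nondegenerate $\Rightarrow$ surjectivity direction: one must split into the two cases $\deg p=m$ and $\deg q=n$ (not mutually exclusive, but at least one holds), and in each case verify that the division-with-remainder step yields $u,v$ within the prescribed degree bounds and that the count of free parameters ($n+1$ coefficients of $u$ plus $m+1$ of $v$, i.e.\ $m+n+2$) correctly surjects onto the $m+n+1$-dimensional target with a one-dimensional kernel. A cleaner alternative, which I would also keep in mind, is to factor through the identity \eqref{S=QT}, $S=QT$, since $Q$ is square of size $m+n+1$: then $\mathrm{rank}\,S = \mathrm{rank}\,(QT)$, and one relates invertibility of $Q$ (equivalently $q(0)\ne 0$ together with $\deg q = n$, by its triangular/Toeplitz structure) and the rank of $T$ (which is $m+n+1$ iff $C$ has rank $n$, iff the $[m|n]$ approximant has defect zero) to nondegeneracy — but this route needs the $\gcd$ condition fed in separately, so the direct resultant argument above is likely the most self-contained.
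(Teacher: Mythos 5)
Your proposal is correct, and for the substantive direction it takes a genuinely different route from the paper. Both arguments handle ``degenerate $\Rightarrow$ rank deficient'' in essentially the same way: you observe that the image of $S$ lies in a proper subspace (multiples of the gcd, or polynomials of degree $\leq m+n-1$ when the defect is positive), while the paper exhibits the dual object explicitly, namely a left null vector of $S$ (the Vandermonde row $(1,\gamma,\dots,\gamma^{m+n})$ at a common root $\gamma$, or the observation that the last row of $S$ vanishes); these are two phrasings of the same fact. For ``nondegenerate $\Rightarrow$ full row rank'' the paper does not argue via surjectivity at all: after a column permutation it writes $S$ in block triangular form with the classical square Sylvester matrix $\underline S$ as the leading block, and invokes the resultant formula $\det \underline S = \pm p_m^n \prod_j q(x_j) \neq 0$. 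Your route --- B\'ezout's identity $qa-pb=1$ followed by Euclidean division to push $(aw,bw)$ into the degree box, using that $\deg p = m$ or $\deg q = n$ --- is more elementary and self-contained (no citation of the resultant product formula needed), at the cost of the case split and degree bookkeeping you correctly anticipate; the paper's version is shorter given the cited formula. Your fallback via $S=QT$ is rightly set aside, since $Q$ invertible plus $T$ full rank does not by itself encode coprimality.

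One bookkeeping slip to fix when you write this up: full row rank of $S$ is equivalent to the \emph{columns of $S$} (not of $S^t$) spanning $\mathbb C^{m+n+1}$, and since the first column block of $S$ consists of $m+1$ shifts of $\mbox{vec}(q)$ and the second of $n+1$ shifts of $-\mbox{vec}(p)$, the multiplier of $q$ has degree $\leq m$ and that of $p$ has degree $\leq n$ --- the reverse of the bounds you assigned. With the labels corrected, your division step does close: if, say, $\deg p=m$, reducing the $q$-multiplier modulo $p$ to degree $\leq m-1$ forces $\deg(pV)\leq m+n$ and hence $\deg V\leq n$, exactly as you sketch.
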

\begin{proof}
    Suppose that $p/q$ is degenerate. Then either $p_n=q_m=0$ (implying that the last row of $S$ is zero), or else there exists $\gamma\in \mathbb C$ with $p(\gamma)=q(\gamma)=0$, implying that
    $(1,\gamma,...,\gamma^{m+n})S=0$. Thus, in both cases $S$ does not have full row rank.

    Conversely, suppose that $p/q$ is nondegenerate, then at least one of the leading coefficients $p_m$ or $q_n$ is not vanishing, without loss of generality $p_m\neq 0$.
    Notice that, up to permutation of columns, $S$ equals
    $$
          \left[\begin{array}{ccc}  \underline S & * & * \\ 0 & q_n & - p_m
          \end{array}\right]
    $$
    with the classical square Sylvester matrix $\underline S \in \mathbb C^{(m+n)\times (m+n)}$, obtained from $S$ by dropping the last row, and last column in each column block. With $x_1,...,x_m$ the roots of $p$, observe that by assumption $q(x_j)\neq 0$. We use the formula \cite[Theorem~9.3(ii)]{Labahn}
    $$
         \det \underline S = \pm (p_m)^n \prod_{j=1}^m q(x_j) \neq 0
    $$
    in order to conclude that $\underline S$ and thus $S$ has full row rank.
\end{proof}

Recall from \eqref{S=QT} that $S$ having rank $m+n+1$ implies that the matrix $Q$ defined in \eqref{def_Sylvester} is invertible, and the matrix $T$ defined in \eqref{homogeneous_system2} also has full rank $m+n+1$.

\begin{lemma}\label{lem_T_C}
    Suppose that $S$ has rank $m+n+1$. Then
    for the matrices $C$ of \eqref{homogeneous_system} and $T$ of \eqref{homogeneous_system2} we have that
    \begin{eqnarray}
         &&  \label{T_C1} \max \{ 1 , \| C \| \} \leq \| T \| \leq \sqrt{m+n+2} ,
        \\&& \label{T_C2} \| C^\dagger \|  \leq \| T^\dagger \| \leq \sqrt{2(m+n+2)} \, \| C^\dagger \| .
    \end{eqnarray}
    Furthermore, for the matrices $S,Q$ of \eqref{def_Sylvester} with the normalization
    \eqref{def_Pade_map2} there holds
    \begin{equation} \label{T_C3}
           \| Q \| \leq \sqrt{m+n+1} , \quad \frac{1}{\sqrt{2}} \leq \| S \| \leq \sqrt{m+n+1} , \quad  \| Q^{-1} \| \leq \| T \| \, \| S^\dagger \|
           , \quad  \| T^{\dagger} \| \leq \| Q \| \, \| S^\dagger \|.
    \end{equation}
\end{lemma}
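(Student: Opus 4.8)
The plan is to prove the six inequalities in three groups, in the order \eqref{T_C1}, then \eqref{T_C2}, then \eqref{T_C3}, since the later groups reuse the earlier bounds.

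\emph{Step 1: the crude norm bounds \eqref{T_C1} and the upper bounds in \eqref{T_C3}, by column counting.} Every column of $T$ is either one of the $m+1$ standard unit vectors coming from the $I_{m+1}$ block, or a contiguous sub-vector of $(-c_0,\dots,-c_{m+n})$, hence of Euclidean norm $\le 1$ by \eqref{scaling}; adding up the $m+n+2$ squared column norms gives $\|T\|\le\|T\|_F\le\sqrt{m+n+2}$. Then $\|C\|\le\|T\|$ because $-C$ is the lower-right block (a submatrix) of $T$, and $\|T\|\ge1$ since the first column of $T$ is a unit vector. The same counting gives $\|Q\|\le\sqrt{m+n+1}$ (each column of $Q$ is a sub-vector of $\mbox{vec}(q)$, of norm $\le1$ by \eqref{def_Pade_map2}), and $\|S\|_F^2\le(m+1)\,\|\mbox{vec}(q)\|^2+(n+1)\,\|\mbox{vec}(p)\|^2\le\max\{m+1,n+1\}\le m+n+1$. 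For the lower bound $\|S\|\ge1/\sqrt2$ I would note that $\mbox{vec}(q)$ and $-\mbox{vec}(p)$, padded with zeros, each appear as a column of $S$, so $\|S\|\ge\max\{\|\mbox{vec}(p)\|,\|\mbox{vec}(q)\|\}\ge1/\sqrt2$.

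\emph{Step 2: comparing $\|T^\dagger\|$ and $\|C^\dagger\|$, i.e.\ \eqref{T_C2}.} Write $\sigma:=\sigma_{\min}(T)$, so $\|T^\dagger\|=\sigma^{-1}$ and $\sigma=\min_{\|u\|=1}\|T^* u\|$; in $2\times2$ block form $T^* u=(u_1,\,B_1^* u_1-C^* u_2)$ for $u=(u_1,u_2)$, where $B_1$ is the top block of the right part of $T$. Taking $u=(0,v)$ with $v$ a minimizing unit vector of $C^*$ gives at once $\sigma\le\|C^* v\|=\sigma_{\min}(C)$, hence $\|C^\dagger\|\le\|T^\dagger\|$. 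For the reverse bound, take a minimizing unit vector $u=(u_1,u_2)$, so $\sigma^2=\|u_1\|^2+\|B_1^* u_1-C^* u_2\|^2$; then, using $\|B_1\|\le\sqrt{n+1}$ (column counting again) and Cauchy--Schwarz, $\sigma_{\min}(C)\,\|u_2\|\le\|C^* u_2\|\le\|B_1^* u_1-C^* u_2\|+\|B_1\|\,\|u_1\|\le\sqrt{n+2}\,\sigma$. If $\sigma\le1/\sqrt2$ then $\|u_1\|\le\sigma\le1/\sqrt2$, so $\|u_2\|\ge1/\sqrt2$ and $\sigma\ge\sigma_{\min}(C)/\sqrt{2(n+2)}\ge\sigma_{\min}(C)/\sqrt{2(m+n+2)}$; if $\sigma>1/\sqrt2$ the bound is immediate since $\sigma_{\min}(C)\le\|C\|\le\sqrt{m+n+2}$ by \eqref{T_C1}. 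This is the part that needs real care: pinning down the factor $\sqrt2$ and the precise power $m+n+2$ depends on using $\|B_1\|\le\sqrt{n+1}$ rather than the cruder $\|B_1\|\le\|T\|$, and on the above dichotomy; I expect this to be the only genuine obstacle.

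\emph{Step 3: the two relations in \eqref{T_C3} involving $S^\dagger$.} By the remark following Lemma~\ref{lem_degeneracy}, $\rank S=m+n+1$ forces $Q$ invertible and $T,S$ of full row rank, so $TT^\dagger=SS^\dagger=I_{m+n+1}$. Substituting $S=QT$ into $S^\dagger=S^*(SS^*)^{-1}$ and simplifying $(QTT^*Q^*)^{-1}=(Q^*)^{-1}(TT^*)^{-1}Q^{-1}$ yields $S^\dagger=T^\dagger Q^{-1}$; hence $T^\dagger=S^\dagger Q$ and $Q^{-1}=TT^\dagger Q^{-1}=TS^\dagger$, and taking spectral norms gives $\|T^\dagger\|\le\|Q\|\,\|S^\dagger\|$ and $\|Q^{-1}\|\le\|T\|\,\|S^\dagger\|$. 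Here I anticipate no difficulty beyond keeping track of the side on which each factor sits.
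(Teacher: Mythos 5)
Your proof is correct, and most of it coincides with the paper's own argument: Steps 1 and 3 use exactly the paper's devices, namely Frobenius-norm column counts for the upper bounds on $\|T\|,\|Q\|,\|S\|$, the observation that $\mbox{vec}(q)$ and $-\mbox{vec}(p)$ occur (zero-padded) as columns of $S$ for $\|S\|\ge 1/\sqrt2$, the variational characterization of $\sigma_{\min}$ restricted to vectors of the form $(0,v)$ for $\|C^\dagger\|\le\|T^\dagger\|$, and the identities $S^\dagger=T^\dagger Q^{-1}$, $T^\dagger=S^\dagger Q$, $Q^{-1}=TS^\dagger$ extracted from the full-rank factorization $S=QT$. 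The one genuine divergence is the upper bound in \eqref{T_C2}. The paper constructs an explicit right inverse $T^R$ of $T$, the block matrix with blocks $I$, $-LC^\dagger$, $0$, $-C^\dagger$ (where $-L$ is your $B_1$), factors it as a matrix with rows of norm $\le\sqrt2$ times $\diag(C^\dagger,C^\dagger)$, and uses that $T^\dagger T$ is an orthogonal projector to get $\|T^\dagger\|=\|T^\dagger TT^R\|\le\|T^R\|\le\sqrt{2(m+n+2)}\,\|C^\dagger\|$. You instead analyze the minimizing left singular vector $u=(u_1,u_2)$ of $T$ directly, obtaining $\sigma_{\min}(C)\,\|u_2\|\le\sqrt{1+\|B_1\|^2}\,\sigma_{\min}(T)\le\sqrt{n+2}\,\sigma_{\min}(T)$ by Cauchy--Schwarz, and then handle the normalization of $u_2$ by the dichotomy $\sigma_{\min}(T)\le1/\sqrt2$ versus $\sigma_{\min}(T)>1/\sqrt2$, the latter case being trivial since $\sigma_{\min}(C)\le\|C\|\le\sqrt{m+n+2}$. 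Both routes produce the same constant $\sqrt{2(m+n+2)}$. The paper's construction is shorter and exhibits the right inverse explicitly; yours is more self-contained, needing only the variational characterization of the smallest singular value, at the price of the case split — and your insistence on $\|B_1\|\le\sqrt{n+1}$ rather than the cruder $\|B_1\|\le\|T\|$ is precisely what keeps the constant down to the stated value.
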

\begin{proof}
    Since $1$ is an entry and $-C$ a submatrix of $T$, we obtain the first inequality of \eqref{T_C1}, and the second follows from the scaling \eqref{scaling} and the general fact that any matrix $\in \mathbb C^{(m+n+1)\times (m+n+2)}$ with columns of norm $\leq 1$ has a Froebenius norm $\leq \sqrt{m+n+2}$.

    For a proof of $\eqref{T_C2}$, we first recall that by assumption both $C$ and $T$ have full row rank, and hence
    $$
          \frac{1}{\| T^\dagger \|} = \min_{y\in \mathbb C^{m+n+1}} \frac{\| y^* T \|}{\| y \|}
          \leq \min_{x\in \mathbb C^{n}} \frac{\| (0,x^*) T \|}{\| x \|} = \min_{x\in \mathbb C^{n}} \frac{\| x^* C \|}{\| x \|} = \frac{1}{\| C^\dagger \|},
    $$
    implying the first inequality. For the second, recall that
    $C^\dagger=C^* (C C^*)^{-1}$ and hence the two matrices
    $$
         T = \left[\begin{array}{cc} I & - L \\ 0 & - C \end{array}\right], \quad
         T^R = \left[\begin{array}{cc} I & - LC^\dagger \\ 0 & - C^\dagger \end{array}\right] = \left[\begin{array}{cc} C & - L \\ 0 & - I \end{array}\right] \left[\begin{array}{cc} C^\dagger & 0 \\ 0 & C^\dagger \end{array}\right]
    $$ satisfy $T T^R = I$. Since the orthogonal projector $T^\dagger T$ is of norm $1$, we conclude that $\| T^\dagger \| = \| T^\dagger T T^R \| \leq \| T^R \|$. It remains to observe that the right-hand factor in the above factorization of $T^R$ has norm $\| C^\dagger \|$, and the left-hand factor has rows of norm $\leq 2$ (in fact $\leq 1$ provided that $n \leq m$) due to \eqref{scaling}.

    We finally turn to a proof of \eqref{T_C3}, the upper bound for $\| Q \|$ following as before from the scaling \eqref{def_Pade_map2}. Using \eqref{def_Pade_map2} we also observe that the sum of the squares of the norms of all columns of the matrix $S$ equals $\| S \|_F^2 = (m+1) \| \mbox{vec}(q) \|^2 +(n+1) \| \mbox{vec}(p) \|^2 \leq m+n+1$
   and the sum of the squares of the norms of the first and $(n+2)$nd column of $S$ equals $\| \mbox{vec}(q) \|^2 + \| \mbox{vec}(p) \|^2=1\leq 2 \, \| S \|^2$, implying the claimed inequalities for $\| S \|$. For the upper bound for $\|Q^{-1}\|$ (which we suspect to be not very sharp), we use \eqref{S=QT} in order to conclude that $I = S S^\dagger=Q T S^\dagger$ and thus $Q^{-1}=T S^\dagger$.
   Finally, since \eqref{S=QT} is a full rank decomposition, we also have that
   $S^\dagger = T^\dagger Q^{-1} $ and thus $T^\dagger = S^\dagger Q$, implying the claimed bound for $\| T^\dagger \|$.
\end{proof}

Let us now turn to a proof of Theorem~\ref{thm_stability}. Here it is helpful to consider the nonlinear map 
$$
      G : \mathbb R^{m+n+2} \ni \widetilde y=\left[\begin{array}{cc} \mbox{vec}(\widetilde p) \\ \mbox{vec}(\widetilde q)
       \end{array}\right] \mapsto \widetilde c = \left[\begin{array}{cc}
 \widetilde c_0\\ \vdots \\\widetilde c_{m+n} \end{array}\right] \in \mathbb R^{m+n+1} , \quad
        \frac{\widetilde p(z)}{\widetilde q(z)} = \sum_{j=0}^{m+n} \widetilde c_jz^j + \mathcal O(z^{m+n+1})_{z\to 0} ,
$$
which is defined at least for pairs of polynomials $\widetilde p,\widetilde q$ with $\widetilde q(0)\neq 0$, as it is true for a neighborhood of any value $F(c)$. As we see below, it will be easier to study the differentiability of $G$ than that of the Pad\'e map $F$.
 Under the assumptions of Theorem~\ref{thm_stability}, we will show by applying the Implicit Function Theorem that $G$ is a kind of local inverse of $F$: there exist neighborhoods $\mathcal W \subset \mathbb R^{m+n+2}$ of $y=F(c)$ and $\mathcal U \subset \mathbb R^{m+n+1}$ of $c$ such that
 \begin{eqnarray}
     && \label{diff1} \mbox{$G$ is differentiable in $\mathcal W$ with Jacobian~}
     J_G(F(c)) = Q^{-1} T,
     \\ && \label{diff2} \mbox{for all $\widetilde y \in \mathcal W\cap \mathbb S^{m+n+2}$ we have that $F(G(\widetilde y ))=\widetilde y$},
     \\ && \label{diff3} \mbox{for all $\widetilde c \in \mathcal U$ we have that $G(F(\widetilde c ))=\widetilde c$} .
 \end{eqnarray}
Then the statement of Theorem~\ref{thm_stability}(a) will follow by setting $\mathcal V=F(\mathcal U)\subset \mathcal W\cap \mathbb S^{m+n+2}$.


\begin{proof}{of Theorem~\ref{thm_stability}(a).}
   Let us first construct a neighborhood $\mathcal W$ of $y=F(c)$ and prove \eqref{diff1}. In the sequel of the proof we adapt the notation $Q=Q(q)$ for the triangular Toeplitz matrix in
   \eqref{def_Sylvester}, and $T_0(c)$ for the submatrix of $T=T(c)$ in \eqref{homogeneous_system2} formed by the last $n+1$ columns.
   First notice that
   $$
        \widetilde y=\left[\begin{array}{cc} \mbox{vec}(\widetilde p) \\ \mbox{vec}(\widetilde q)
       \end{array}\right] , \quad \widetilde c = G(\widetilde y) = Q(\widetilde q)^{-1}
       \left[\begin{array}{cc} \mbox{vec}(\widetilde p) \\ 0
       \end{array}\right] .
   $$
   By assumption and Theorem~\ref{thm_continuity}, $F(c)$ is non degenerate. Thus, by Lemma~\ref{lem_degeneracy}, $S=S(\widetilde y)$ has full row rank for all $\widetilde y\in \mathcal W$, a sufficiently small neighborhood of $y=F(c)$.
   As a consequence, $Q(\widetilde q)$ is invertible, and thus $G$ is well-defined on $\mathcal W$. In addition, by the differentiability of the maps $\mbox{vec}(\widetilde q) \mapsto Q(\widetilde q)$ and $\mbox{vec}(\widetilde q) \mapsto Q(\widetilde q)^{-1}$, we also conclude that $G$ is differentiable on $\mathcal W$. Notice that $\widetilde c=G(\widetilde y)$ does satisfy
   $$
        \left[\begin{array}{cc} \mbox{vec}(\widetilde p) \\ 0
       \end{array}\right]  = Q(\widetilde q) G(\widetilde y) = - T_0(\widetilde c) \mbox{vec}(\widetilde q).
   $$
   Taking the product rule for partial derivatives, we obtain
   $$
        \left[\begin{array}{cc} I & 0 \\ 0 & 0
        \end{array}\right] = Q(\widetilde q) J_G(\widetilde y) - T_0(\widetilde c)
        \left[\begin{array}{cc} 0 & I
        \end{array}\right]
   $$
   implying that $Q(\widetilde q) J_G(\widetilde y)=T(\widetilde c)$, as claimed in \eqref{diff1}.

   We proceed with showing \eqref{diff2}, implying the injectivity of $G$ restricted to $\mathcal W\cap \mathbb S^{m+n+2}$. By definition of $\mathcal W$, we have that $\widetilde y \in \mathcal W\cap \mathbb S^{m+n+2}$ is nondegenerate, in particular $\widetilde q(0)\neq 0$, $\| \widetilde y \|=1$ and trivially $T(\widetilde c)\widetilde y=0$ for $\widetilde c = G(\widetilde y)$ by definition of $G$. Since $q(0)>0$, by possibly making $\mathcal W$ smaller, we may also assume that $\widetilde q(0)>0$. Then $\widetilde y = F(\widetilde c)$ by definition of the Pad\'e map $F$, as claimed in  \eqref{diff2}.

   In order to establish \eqref{diff3} together with the claimed formula for $J_F(c)$, we consider the function
   $$
          H: \mathcal W \times \mathbb R^{m+n+1} \ni (\widetilde y,\widetilde c) \mapsto H(\widetilde y,\widetilde c) = \left[\begin{array}{cc} G(\widetilde y)-\widetilde c \\
          \widetilde y^t \widetilde y - 1 \end{array}\right] ,
   $$
   being of class $\mathcal C^1$ by \eqref{diff1}. Notice that
   $$
        \frac{\partial H}{\partial \widetilde y}(\widetilde y,\widetilde c) = \left[\begin{array}{cc} J_G(\widetilde y) \\ 2 \widetilde y^t\end{array}\right] = \left[\begin{array}{cc} Q(\widetilde q)^{-1} T(\widetilde c) \\ 2 \widetilde y^t\end{array}\right]
   $$
   is invertible since the same is true for
   $$
        \left[\begin{array}{cc} Q(\widetilde q)^{-1} T(\widetilde c) \\ 2 \widetilde y^t\end{array}\right]
        \left[\begin{array}{cc} Q(\widetilde q)^{-1} T(\widetilde c) \\ 2 \widetilde y^t\end{array}\right]^*
        =
                \left[\begin{array}{cc} Q(\widetilde q)^{-1} T(\widetilde c)T(\widetilde c)^*Q(\widetilde q)^{-*} & 0 \\ 0 & 4 \widetilde y^t \widetilde y\end{array}\right]
   $$
   for $\widetilde y \in \mathcal W$ by definition of $\mathcal W$ and for $\widetilde c$ sufficiently close to $c$.
   Also, we have that $H(F(c),c)=0$ because $T(c)F(c)=0$ and $q(0)\neq 0$. The Implicit Function Theorem thus implies the existence of a neigborhood $\mathcal U$ of $c$ and a $\mathcal C^1$ function $\widetilde F:\mathcal U \mapsto \mathcal W\cap \mathbb S^{m+n+2}$ such that
   $H(\widetilde F(\widetilde c),\widetilde c)=G(\widetilde F(\widetilde c))-\widetilde c=0$ for all $\widetilde c\in \mathcal U$, and thus $F(\widetilde c)=F(G(\widetilde F(\widetilde c)))=\widetilde F(\widetilde c)$ by \eqref{diff2}, implying \eqref{diff3}.

   We also learn from the Implicit Function Theorem that
   \begin{eqnarray*}
        J_F(c)&= &- \frac{\partial H}{\partial \widetilde y}(y,c)^{-1} \frac{\partial H}{\partial \widetilde c}(y,c)
        =
        \left[\begin{array}{cc} Q^{-1} T \\ 2 y^t\end{array}\right]^*
                \left[\begin{array}{cc} Q^{-1} TT^*Q^{-*} & 0 \\ 0 & 4 y^t y\end{array}\right]^{-1}
        \left[\begin{array}{cc} I \\ 0
        \end{array}\right]  \\
        &= &T^* (T T^*)^{-1} Q = T^\dagger Q = J_G(F(c))^\dagger.
    \end{eqnarray*}
    To sum up, $F:\mathcal U \mapsto \mathcal V := F(\mathcal U) \subset \mathcal W\cap \mathbb S^{m+n+2}$ is surjective by construction, injective by \eqref{diff3}, differentiable with Jacobian $J_F(c)=T^\dagger Q$, and has the inverse $G|_{\mathcal V}$ being differentiable by \eqref{diff1}, as claimed in Theorem~\ref{thm_stability}(a).
\end{proof}

\begin{proof}{of Theorem~\ref{thm_stability}(b).}
    It is not difficult to check that the neighborhood $\mathcal W$ of $y=F(c)$ constructed above can be chosen to be a ball centered at $y=F(c)$, with radius $r>0$.
    Notice that $F(\mathbb R^{m+n+1})\subset \mathbb S^{m+n+2}$, and thus for $\widetilde y \in \mathcal W$
    $$
         \mbox{dist}(\widetilde y,F(\mathbb R^{m+n+1}))
         \leq \mbox{dist}(\widetilde y,\mathbb S^{m+n+2}) =
         \| \widetilde y - \frac{\widetilde y}{\|\widetilde y\|}\|  = \Bigl| \| \widetilde y \| - 1 \Bigr|.
    $$
    Thus for establishing the statement of Theorem~\ref{thm_stability}(b) it only remains to show that $\widetilde y/\|\widetilde y\| \in F(\mathbb R^{m+n+1})$, which would follow from \eqref{diff2} provided that $\widetilde y/\|\widetilde y\|\in \mathcal W$. In order to show the latter, notice that $\| y \|=1$, and thus
    $$
         \| y - \frac{\widetilde y}{\|\widetilde y\|} \| \leq \| y - \widetilde y \| + \Bigl| \, \| \widetilde y \| - \| y \| \, \Bigr| < r
    $$
    for $\widetilde y$ sufficiently close to $y$, and thus ${\widetilde y}/{\|\widetilde y\|}\in \mathcal W$.
\end{proof}

\begin{proof}{of Theorem~\ref{thm_stability}(c).}
    From \cite{TrefethenBau} we have the following well-known relation for the forward condition number $\kappa_{for}(F)(c)$
    $$
       \limsup_{\widetilde c \to c}
       \frac{{\| F(\widetilde c) - F(c) \|}}{{\| \widetilde c - c \|}} =
       \limsup_{\widetilde c \to c}
       \frac{{\| F(\widetilde c) - F(c) \|}/{\|F(c) \|}}{{\| \widetilde c - c \|}/{\|c \|}}
       = \frac{\| c \|}{\| F(c)\|} \| J_F(c) \| = \| T^\dagger Q  \|,
    $$
    where we used the facts that $\| c \|=1$ according to \eqref{scaling}, $\| F(c) \|=1$ by definition \eqref{def_Pade_map2}, and that we have the explicit formula of Theorem~\ref{thm_stability}(a) for the Jacobian.
\end{proof}

\begin{proof}{of Theorem~\ref{thm_stability}(d).}
   From the proof of Theorem~\ref{thm_stability}(b) and \eqref{diff2} we know that,
   for $\widetilde y$ sufficiently close to $y=F(c)$,
   \begin{equation} \label{diff4}
         \mbox{dist}(\widetilde y,F(\mathbb R^{m+n+1})) = \| \widetilde y - F(\widetilde c) \|,
   \end{equation}
   with $\widetilde c=G(\widetilde y/\| \widetilde y\|)$. Notice that, by \eqref{diff3}, there are no other arguments $\widetilde c\in \mathcal U$ satisfying \eqref{diff4}. Also,
   $\widetilde c=G(\widetilde y)$ by definition of $G$. Thus
   $\inf \{ \| \widetilde c - c \| : F(\widetilde c)=\widetilde y/\| \widetilde y\|\} = \| G(\widetilde y) - G(y) \|$, and $$
       \kappa_{back}(F)(c)=\kappa_{for}(G)(F(c))= \| J_G(F(c)) \| = \| Q^{-1} T \|,
   $$
   where in the last equality we applied \eqref{diff1}.
\end{proof}


\section{Distances between two rational functions and their coefficient vectors}\label{sec3}

A central question in this paper is how to measure the distance between
two rational functions
$$
      r = p/q \in \mathcal R_{m,n}, \quad \widetilde r = \widetilde p/\widetilde q \in \mathcal R_{m,n},
$$
with coefficient vectors
$$
   x(r) = \left[\begin{array}{cc} \mbox{vec}(p) \\\mbox{vec}(q)
        \end{array}\right], \quad
   x(\widetilde r) = \left[\begin{array}{cc} \mbox{vec}(\widetilde p) \\\mbox{vec}(\widetilde q)
        \end{array}\right].
$$
A natural metric in the set $\mathcal M_K$ of functions meromorphic in some compact $K\subset \mathbb C$ would be the uniform chordal metric $\chi_K(r,\widetilde r)$ introduced in \eqref{chordal}.
This metric is well adapted to study uniform convergence questions, since meromorphic functions are continuous on the Riemann sphere. We will also see that it enables us to study Froissart doublets and small residuals. However, it is not so clear how to relate such a metric to the coefficient vectors in the basis of monomials of numerators and denominators of rational functions, which are used to parametrize rational functions in the Pad\'e map.
This is essentially due to the fact that there are several coefficient vectors $x(r)$ representing the same rational function $r$: even if we suppose that $r$ is nondegenerate, we still may multiply $x(r)$ by an arbitrary complex scalar. As before, we will always suppose that coefficient vectors are of norm $1$, but this fixes only the absolute value but not the phase of the scalar normalization constant. For defining a metric between rational functions it will therefore be suitable to measure the distance of coefficient vectors with optimal phase
\begin{equation} \label{dist}
     r,\widetilde r \in \mathcal R_{m,n}:
     \quad d(r,\widetilde r) := \min \{ \| x(r) - a x(\widetilde r)\|: a \in \mathbb C, |a|=1 \}.
\end{equation}
The reader easily checks that $\| x(r) - a x(\widetilde r)\|$ does not depend on $a$ if
$x(r)$ and $x(\widetilde r)$ are mutually orthogonal, and else
\begin{equation} \label{dist2}
      \arg\min \{ \| x(r) - a x(\widetilde r)\|: a \in \mathbb C, |a|=1 \}
      = \frac{x(\widetilde r)^*x(r)}{|x(\widetilde r)^*x(r)|}.
\end{equation}
In particular, if both $x(r)$ and $x(\widetilde r)$ are real then $$
    d(r,\widetilde r) = \min \{ \| x(r) - x(\widetilde r)\|, \| x(r) + x(\widetilde r)\| \},
$$
and more precisely $d(r,\widetilde r) = \| x(r) - x(\widetilde r)\|$ provided that $x(\widetilde r)^*x(r)\geq 0$ or $\| x(r) - x(\widetilde r)\|\leq \sqrt{2}$, as it was the case in our study of the continuity and the conditioning of the real Pad\'e map.

Recall from the introduction that we called a rational function $r=p/q$ well-conditioned if the condition number $\kappa(S)$ is not too large, $\kappa(S)$ not depending on the normalization of the coefficient vector occurring in \eqref{def_Sylvester}. The following result shows that the two distances $d(r,\cdot)$ and $\chi_{\mathbb D}(r,\cdot)$ for the closed unit disk $\mathbb D$ introduced above are of comparable size provided that $r$ is well-conditioned.

\begin{theorem}\label{thm_distance}
   Let $r=p/q$ be nondegenerate, then for all $\widetilde r\in \mathcal R_{m,n}$
   \begin{equation} \label{distance}
           \frac{(m+n+1)^{-3/2}}{\sqrt{2} \kappa(S)}
           \, d(r,\widetilde r)
           \leq
           \chi_{\mathbb D}(r,\widetilde r)
           \leq \sqrt{2(m+n+1)} \, \kappa(S)
           \, d(r,\widetilde r)
           .
   \end{equation}
\end{theorem}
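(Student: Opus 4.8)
The plan is to connect both distances to the residual $f(z) := q(z)\widetilde p(z) - p(z)\widetilde q(z)$, a polynomial of degree $\le m+n$, whose coefficient vector is exactly $S(q,-p)\, x(\widetilde r)$ (reading off the definition of $S$ in \eqref{def_Sylvester}, with the columns of the two blocks picking out $\mathrm{vec}(\widetilde p)$ and $\mathrm{vec}(\widetilde q)$). The key observation is that $f$ simultaneously governs the chordal distance — since on $\mathbb D$ one has $r(z)-\widetilde r(z) = f(z)/(q(z)\widetilde q(z))$, and the chordal metric essentially measures $|r-\widetilde r|$ weighted by $1/\sqrt{1+|r|^2}\sqrt{1+|\widetilde r|^2}$, which cancels the denominators in a controlled way — and the coefficient distance, since $d(r,\widetilde r)$ measures how far $x(\widetilde r)$ is from the one-dimensional space $\mathbb C\, x(r)$, i.e. essentially $\|S x(\widetilde r)\|$ up to the conditioning of $S$ (note $S x(r)=0$ since $q p - p q = 0$, so $x(r)$ spans $\ker S$ as $S$ has full row rank by Lemma~\ref{lem_degeneracy}). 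Thus $\|f\| = \|S x(\widetilde r)\|$ is pinched between $\sigma_{\min}(S)$ and $\sigma_{\max}(S)$ times the component of $x(\widetilde r)$ orthogonal to $x(r)$, which up to a bounded factor is $d(r,\widetilde r)$.

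First I would make precise the coefficient-distance side: writing $x(\widetilde r) = a\, x(r) + w$ with $w \perp x(r)$ and $|a|$ chosen optimally (so $\|w\| \le d(r,\widetilde r) \le \sqrt{2}\|w\|$ in the relevant regime, using $\|x(r)\|=\|x(\widetilde r)\|=1$), we get $S x(\widetilde r) = S w$, hence $\sigma_{m+n+1}(S)\|w\| \le \|f\| \le \|S\|\,\|w\|$. Second, the chordal side: I would use the standard comparison of the chordal metric with the euclidean sup-norm difference on the sphere — bounding $\chi(r(z),\widetilde r(z))$ above and below by $|r(z)-\widetilde r(z)|/(\text{stuff})$ where the ``stuff'' is sandwiched using $1 \le \sqrt{1+|r(z)|^2}$ together with the fact that for $|z|\le 1$ the value $\sqrt{1+|r(z)|^2}$ is comparable to $\|q\cdot\|$-type quantities (more concretely, $\chi(r(z),\widetilde r(z))=|f(z)|/(\sqrt{|q(z)|^2+|p(z)|^2}\sqrt{|\widetilde q(z)|^2+|\widetilde p(z)|^2})$ — this is the clean identity to exploit). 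Then evaluating at $|z|\le 1$ and using that coefficient vectors of norm $1$ give polynomial values bounded by $\sqrt{m+n+1}$ on $\mathbb D$ (Cauchy–Schwarz on $\sum a_j z^j$), and bounded below in an $L^2$-averaged sense by $1/\sqrt{m+n+1}$-type estimates, converts $\chi_{\mathbb D}$ into $\|f\|$ up to powers of $(m+n+1)$.

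Assembling: $\chi_{\mathbb D}(r,\widetilde r) \sim \|f\|$ up to $(m+n+1)^{\pm 1/2}$ factors and up to the denominator-value bounds, while $\|f\| \sim d(r,\widetilde r)$ up to $\kappa(S)$ and the $\sqrt 2$ from $\|w\|$ versus $d$; multiplying these two chains yields \eqref{distance}. The main obstacle I expect is the \emph{lower} bound on $\chi_{\mathbb D}$ — equivalently, an \emph{upper} bound on $\sqrt{|q(z)|^2+|p(z)|^2}$ for $|z|\le 1$, which is the easy direction, but more delicately the step turning a pointwise-on-$\mathbb D$ quantity $\max_{|z|\le 1}|f(z)|$ into the coefficient norm $\|f\|$: the inequality $\|f\| \le (m+n+1)\max_{|z|=1}|f(z)|$ (or similar) must be invoked carefully, and matching the exact exponent $3/2$ in \eqref{distance} requires being economical about where each $\sqrt{m+n+1}$ enters. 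One also must handle the degenerate-looking case where $x(r)\perp x(\widetilde r)$ (so $|a|$ is free), but there $d(r,\widetilde r)=\sqrt2$ is maximal and the bounds hold comfortably; and the case $\widetilde q(0)=0$ or $\widetilde r$ having poles in $\mathbb D$ is fine because the chordal identity for $\chi(r(z),\widetilde r(z))$ in terms of $f$ extends continuously through such points.
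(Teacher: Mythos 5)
Your plan identifies the right ingredients — the chordal identity
$\chi(r(z),\widetilde r(z)) = |f(z)|/\bigl(\sqrt{|p(z)|^2+|q(z)|^2}\,\sqrt{|\widetilde p(z)|^2+|\widetilde q(z)|^2}\bigr)$
with $f=q\widetilde p - p\widetilde q$, the fact that $S x(\widetilde r)=\mathrm{vec}(f)$ and $Sx(r)=0$ so $x(r)$ spans $\ker S$, the orthogonal decomposition $x(\widetilde r)=a\,x(r)+w$ with $\|w\|\le d(r,\widetilde r)\le\sqrt2\|w\|$, and a discretization step to pass between $\|f\|$ and pointwise values of $f$ — and for the \emph{lower} bound on $\chi_{\mathbb D}$ this is essentially the paper's route (the paper uses the unitary DFT matrix to get the precise inequality $\|f\|\le\sqrt{m+n+1}\max_K|f|$ on roots of unity $K$).

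There is, however, a real gap in your \emph{upper} bound on $\chi_{\mathbb D}$. You propose to pass through $\|f\|$ and then control the two denominator factors separately; but the factor $\sqrt{|\widetilde p(z)|^2+|\widetilde q(z)|^2}$ admits \emph{no} uniform lower bound for $|z|\le 1$: $\widetilde r$ is an arbitrary element of $\mathcal R_{m,n}$, possibly degenerate or nearly so, and this factor can vanish on $\mathbb D$. Your alternative remark that ``$1\le\sqrt{1+|r(z)|^2}$ sandwiches the stuff'' would only yield $\chi\le|r-\widetilde r|$, which makes things worse since $|r(z)-\widetilde r(z)|$ has $|q(z)\widetilde q(z)|$ in its denominator. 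Likewise, the claim that $\sqrt{|p(z)|^2+|q(z)|^2}$ is ``bounded below by $1/\sqrt{m+n+1}$-type estimates'' is false in general: the correct lower bound is $\sqrt{|p(z)|^2+|q(z)|^2}\ge 1/\|S^\dagger\|$, which follows from $(1,z,\dots,z^{m+n})SS^\dagger=(1,z,\dots,z^{m+n})$ and is precisely where $\kappa(S)$ enters this direction. The paper sidesteps the problematic $\widetilde r$-factor by \emph{not} going through $\|f\|$ here: it writes $f=(p-\widetilde p)\widetilde q-(q-\widetilde q)\widetilde p$ and applies Cauchy--Schwarz inside the chordal identity, giving $|f(z)|\le\sqrt{|p(z)-\widetilde p(z)|^2+|q(z)-\widetilde q(z)|^2}\cdot\sqrt{|\widetilde p(z)|^2+|\widetilde q(z)|^2}$, which cancels the $\widetilde r$-denominator exactly and leaves only the $r$-denominator to bound via $\|S^\dagger\|$. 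Without this cancellation, a naive execution of your plan either fails for degenerate $\widetilde r$ or forces an extra factor of $\kappa(S)$ (via $\kappa(\widetilde S)\lesssim\kappa(S)$ under a smallness hypothesis, as in Lemma~\ref{lem_conditioning}), landing on a weaker statement than Theorem~\ref{thm_distance}.
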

\begin{proof}
   According to \eqref{dist}, \eqref{dist2}, and our convention on the norm we may choose the phase of $x(\widetilde r)$ such that
   \begin{equation} \label{distance_normalization}
       x(r)= \left[\begin{array}{cc} \mbox{vec}(p) \\\mbox{vec}(q)
       \end{array}\right] , \quad
       x(\widetilde r)= \left[\begin{array}{cc} \mbox{vec}(\widetilde p) \\\mbox{vec}(\widetilde q)
       \end{array}\right]  \quad \mbox{are of norm $1$,} \quad
       \| x(r) - x(\widetilde r) \| = d(r,\widetilde r)
   \end{equation}
   and hence $x(r)^* x(\widetilde r)\geq 0$.
   Hence we may repeat the arguments in the proof of \eqref{T_C3} and get the inequalities
   \begin{equation} \label{distance_norm}
       {1}/{\sqrt{2}} \leq \| S \| \leq \sqrt{m+n+1} .
   \end{equation}

   In order to establish the right-hand inequality of \eqref{distance},
   it is sufficient to show the relation
   \begin{equation} \label{distance1}
         z\in \mathbb D : \quad
         \chi(r(z),\widetilde r(z)) \leq \sqrt{2(m+n+1)} \, \kappa(S)  \, \| x(r) - x(\widetilde r) \|.
   \end{equation}
   By definition of the chordal metric and the Cauchy-Schwarz inequality,
   \begin{eqnarray}
      \chi(r(z),\widetilde r(z))  &=& \nonumber
      \frac{|(p(z)-\widetilde p(z))\widetilde q(z) - \widetilde p(z)(q(z)-\widetilde q(z))|}{\sqrt{|p(z)|^2 + |q(z)|^2}\sqrt{|\widetilde p(z)|^2 + |\widetilde q(z)|^2}}
             \leq \frac{\Bigl\| \left[\begin{array}{cc} p(z)-\widetilde p(z) \\ q(z)-\widetilde q(z)\end{array}\right]\Bigr\|}{\sqrt{|p(z)|^2 + |q(z)|^2}}
      \\
      &=& \label{distance2}
      \frac{\Bigl\|  \left[\begin{array}{cc} 1,z,...,z^m & 0 \\ 0 & 1,z,...,z^n \end{array}\right]
 (x(r)-x(\widetilde r))\Bigr\| }{\sqrt{|p(z)|^2 + |q(z)|^2}}.
   \end{eqnarray}
   Let us study separately the term in the denominator. We remark that
   \begin{equation} \label{distance3}
         \Bigl(1,z, \cdots z^{n+m}\Bigr) S = \Bigl(-q(z), -zq(z),\cdots ,-z^mq(z), p(z), \cdots , z^np(z)\Bigr).
   \end{equation}
   By Lemma~\ref{lem_degeneracy} we know that the Sylvester-like matrix $S$ has full row rank and hence $S S^\dagger =I$. Multiplying the above relation on the right by $S^\dagger$ and taking norms we arrive at
   \begin{eqnarray*}
         \Bigl\| \Bigl(1,z, \cdots z^{n+m}\Bigr) \Bigr\|^2
         &\leq & \| S^\dagger \|^2 \,  \Bigl\| \Bigl(-q(z), -zq(z),\cdots ,-z^mq(z), p(z), \cdots , z^np(z)\Bigr) \Bigr\|^2
         \\
         &\leq & \| S^\dagger \|^2 \,  \Bigl\| \left[\begin{array}{cc} 1,z,...,z^m & 0 \\ 0 & 1,z,...,z^n \end{array}\right] \Bigr\|^2 \, ( |p(z)|^2+|q(z)|^2 ),
   \end{eqnarray*}
   which implies that
   \begin{equation}\label{distance4}
       \forall \, z \in \mathbb C : \quad 1 \leq \| S^\dagger \| \, \sqrt{|p(z)|^2 + |q(z)|^2}  .
   \end{equation}
   Inserting \eqref{distance4} into \eqref{distance2} and using \eqref{distance_norm} and the fact that $z \in \mathbb D$ implies \eqref{distance1}.

   It remains the left-hand inequality of \eqref{distance},
   for which it is sufficient to show
   \begin{equation} \label{distance11}
           d(r,\widetilde r)
           \leq
           \sqrt{2} \, (m+n+1)^{3/2}\, \kappa(S)  \, \chi_{K}(r,\widetilde r),
   \end{equation}
   with $K$ the set of $(m+n+1)$th roots of unity
 $\xi_j=e^{(2i\pi j)/(m+n+1)}$, $j=0,\cdots , m+n$.
   Denote by $\Omega=(\frac{1}{\sqrt{m+n+1}}\xi_j^k)_{j,k=0,...,m+n}$ the unitary DFT matrix of order $m+n+1$.
   A simple computation shows that $S x(r)=0$. Since  Lemma~\ref{lem_degeneracy} shows that the kernel of $S$ has dimension one and $\| x(r) \| =1$, we have $S^\dagger S = I - x(r) x(r)^*$. Since $x(r)^* x(\widetilde r)\geq 0$, we find an angle $\alpha\in (0,\pi/2]$ such that $\cos(\alpha)=x(r)^* x(\widetilde r)/(\| x(r) \| \, \| x(\widetilde r) \|)=x(r)^* x(\widetilde r)$. Thus
   $$
       d(r,\widetilde r)=
       \| x(r) - x(\widetilde r) \| = \sqrt{2-2\cos(\alpha)} = 2 \sin(\alpha/2),
   $$
   whereas
   $$
       \| S^\dagger S  (x(r) - x(\widetilde r)) \|
       =        \| x(\widetilde r) - x(r) \cos(\alpha)  \|
       = \sqrt{1-\cos^2(\alpha)} = \sin(\alpha) = 2 \sin(\alpha/2) \cos(\alpha/2).
   $$
   Thus $\| S^\dagger S  (x(r) - x(\widetilde r)) \|=\cos(\alpha/2) \, d(r,\widetilde r) \geq d(r,\widetilde r)/\sqrt{2}$, implying that
   $$d(r,\widetilde r)/\sqrt{2} \leq
       \| S^\dagger S (x(r) - x(\widetilde r)) \| \leq
       \| S^\dagger \| \, \| S (x(r) - x(\widetilde r)) \|
       =   \| S^\dagger \| \, \| \Omega S (x(r) - x(\widetilde r)) \| ,$$
      where the last equality follows from the orthogonality of $\Omega $. The $j$th entry of $\Omega S\left(x(r)-x(\widetilde r)\right)$ equals the $j$th entry of  $-\Omega S \left( x(\widetilde r\right)$, which in turn is equal to $(p(\xi_j)\widetilde q(\xi_j )-\widetilde p(\xi_j) q (\xi_j)/\sqrt{m+n+1}$, and so
       \begin{equation}\label{distance12}
       d(r,\widetilde r)/\sqrt{2} \leq \| S^\dagger \| \,  \max_{z\in K} |p(z) \widetilde q(z) - q(z) \widetilde p(z)|
       \end{equation}
   Returning to \eqref{distance3}, we also find that
   \begin{eqnarray}\label{distance13}
       \forall \, |z|\leq 1  : \quad && (m+n+1) \| S \|^2  \geq \| (1,z,...,z^{m+n}) S \|^2
        \\ && \nonumber
        =  |p(z)|^2\, \sum_{j=0}^n |z|^{2j} + |q(z)|^2\, \sum_{j=0}^m |z|^{2j} \geq |p(z)|^2 + |q(z)|^2  .
   \end{eqnarray}
   A similar bound is obtained for $\widetilde p(z),\widetilde q(z)$, which combined with \eqref{distance_norm} becomes $$
           \forall \, |z|\leq 1  : \quad (m+n+1) \geq \sqrt{|\widetilde p(z)|^2 + |\widetilde q(z)|^2}  .
   $$
   Inserting these two relations into the right-hand side of \eqref{distance12} implies \eqref{distance11}.
\end{proof}

\section{Proofs of Theorem~\ref{thm_rational} and of Theorem~\ref{thm_convergence}}\label{sec4}

We start by establishing a technical result on the condition number of Sylvester-like matrices close to $S$.

\begin{lemma}\label{lem_conditioning}
   Let $r=p/q$ be nondegenerate. If $\widetilde r=\widetilde p/\widetilde q\in \mathcal R_{m,n}$ satisfies
   \begin{equation} \label{eq_conditioning}
       \sqrt{2(m+n+1)} \, d(r,\widetilde r) \, \kappa(S)  \leq 1/3,
   \end{equation}
   then it is nondegenerate, and $\kappa(\widetilde S)\leq 2 \, \kappa(S)$ for the Sylvester-like matrix $\widetilde S=S(-\widetilde q,\widetilde p)$ constructed as in \eqref{def_Sylvester}.

   More generally, if $\widetilde r$ is degenerate then $\sqrt{2(m+n+1)} \, d(r,\widetilde r) \, \kappa(S)\geq 1$.
\end{lemma}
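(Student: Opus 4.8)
The plan is to treat this as a standard singular-value perturbation argument, the one structural ingredient being that the Sylvester-like matrix $S=S(q,-p)$ of \eqref{def_Sylvester} depends \emph{linearly and entrywise} on the coefficient vector $x(r)=[\mbox{vec}(p);\mbox{vec}(q)]$, arranged in a fixed banded pattern. Hence passing from $(p,q)$ to $(\widetilde p,\widetilde q)$ perturbs $S$ by an amount controlled by $\|x(r)-x(\widetilde r)\|$, and therefore by $d(r,\widetilde r)$. First I would normalize: using \eqref{dist}--\eqref{dist2}, choose the (unimodular) phase of the representative $x(\widetilde r)$ so that $x(r)$ and $x(\widetilde r)$ are unit vectors with $\|x(r)-x(\widetilde r)\|=d(r,\widetilde r)$; this is harmless because rescaling $x(\widetilde r)$ by a unimodular scalar rescales the matrix by the same scalar, changing neither its rank nor its condition number. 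Writing $\widehat S:=S(\widetilde q,-\widetilde p)=-\widetilde S$, it then suffices to work with $\widehat S$, since $\kappa(\widehat S)=\kappa(\widetilde S)$ and, by Lemma~\ref{lem_degeneracy}, $\widetilde r$ is nondegenerate if and only if $\widehat S$ has full row rank $m+n+1$.

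The second step is the perturbation estimate. Exactly as in the Frobenius-norm computation in the proof of Lemma~\ref{lem_T_C}, one has $\|S-\widehat S\|_F^2=(m+1)\|q-\widetilde q\|^2+(n+1)\|p-\widetilde p\|^2\leq (m+n+1)\,\|x(r)-x(\widetilde r)\|^2$, so $\|S-\widehat S\|\leq \sqrt{m+n+1}\,d(r,\widetilde r)$. Combined with $\|S\|\geq 1/\sqrt2$ from \eqref{distance_norm} (or Lemma~\ref{lem_T_C}) and the full row rank of $S$ from Lemma~\ref{lem_degeneracy}, this gives $\sigma_{m+n+1}(S)=\|S\|/\kappa(S)\geq 1/(\sqrt2\,\kappa(S))$, whence the hypothesis \eqref{eq_conditioning} rewrites as $\|S-\widehat S\|\leq \tfrac13\,\sigma_{m+n+1}(S)$. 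Now I invoke Weyl's inequality for singular values, $|\sigma_k(\widehat S)-\sigma_k(S)|\leq\|S-\widehat S\|$: it yields $\sigma_{m+n+1}(\widehat S)\geq \tfrac23\,\sigma_{m+n+1}(S)>0$, so $\widehat S$ has full row rank and $\widetilde r$ is nondegenerate, and also $\sigma_1(\widehat S)\leq \sigma_1(S)+\tfrac13\,\sigma_{m+n+1}(S)\leq\tfrac43\,\|S\|$, so that $\kappa(\widetilde S)=\kappa(\widehat S)\leq\tfrac{(4/3)\|S\|}{(2/3)\sigma_{m+n+1}(S)}=2\,\kappa(S)$.

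For the final (``more generally'') assertion I would argue by contraposition: if $\widetilde r$ is degenerate then $\widehat S$ fails to have full row rank by Lemma~\ref{lem_degeneracy}, i.e. $\sigma_{m+n+1}(\widehat S)=0$, so Weyl's inequality together with the two bounds of the previous step gives $1/(\sqrt2\,\kappa(S))\leq\sigma_{m+n+1}(S)\leq\|S-\widehat S\|\leq\sqrt{m+n+1}\,d(r,\widetilde r)$, that is, $\sqrt{2(m+n+1)}\,d(r,\widetilde r)\,\kappa(S)\geq 1$. I do not anticipate a genuine obstacle; the argument is entirely elementary. The only points needing care are the bookkeeping with the sign and phase conventions (the matrix $\widetilde S=S(-\widetilde q,\widetilde p)$ versus $S(\widetilde q,-\widetilde p)$, and the unimodular normalization of $x(\widetilde r)$), and checking that the estimate of $\|S-\widehat S\|$ lines up with the constant $\sqrt{2(m+n+1)}$ in \eqref{eq_conditioning} — here the spare factor $\sqrt2$ is merely slack inherited from $\|S\|\geq 1/\sqrt2$, not from a sharp estimate.
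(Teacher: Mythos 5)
Your proof is correct and follows essentially the same route as the paper: both reduce to the bound $\|S-\widetilde S\|\le\sqrt{m+n+1}\,d(r,\widetilde r)$ (via the Frobenius-norm computation and the phase normalization of $x(\widetilde r)$) together with $\|S\|\ge 1/\sqrt2$, and then perturb the extreme singular values. The only cosmetic difference is that you invoke Weyl's inequality where the paper forms $E=S^\dagger(S-\widetilde S)$ and uses the explicit right inverse $(I-E)^{-1}S^\dagger$ of $\widetilde S$ (and the Eckart--Young theorem for the degenerate case); the resulting factors $\tfrac{4/3}{2/3}$ versus $\tfrac43\cdot\tfrac32$ give the same constant $2$.
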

\begin{proof}
   For a proof of the first statement, write $E:=S^\dagger (S-\widetilde S)$, and denote by $x(r),x(\widetilde r)$ the corresponding coefficent vectors with unit norm and particular phase such that $\| x(r) - x(\widetilde r) \|= d(r,\widetilde r)$.  Then
   $S(I-E)=S - SS^\dagger (S-\widetilde S)=\widetilde S$. Using the same arguments as in the proof of \eqref{distance_norm}, we obtain
   \begin{eqnarray*}
        \| E \| &\leq & \| S^\dagger \| \, \| S-\widetilde S \|
        \leq \sqrt{m+n+1} \| S^\dagger \| \, \| x(r)-x(\widetilde r) \|
        \\&\leq& \sqrt{2(m+n+1)} \, \kappa(S) \, d(r,\widetilde r) \leq 1/3
   \end{eqnarray*}
   by assumption \eqref{eq_conditioning} on $\widetilde r$. Hence $\| \widetilde S \| \leq (1+\| E \|)\,  \| S \|\leq \frac{4}{3} \, \| S \|$.
   Also, $(I-E)^{-1} S^\dagger$ is a right inverse of $\widetilde S$, showing that $\widetilde S$ has full row rank, and that
   $$
          \| \widetilde S^\dagger \| = \| \widetilde S^\dagger \widetilde S (I-E)^{-1} S^\dagger \| \leq \| (I-E)^{-1} \| \, \| S^\dagger \| \leq \frac{3}{2} \| S^\dagger \|,
   $$
   from which the first assertion follows.

   For the second part, we know from Lemma~\ref{lem_degeneracy} that $\rank \widetilde S <m+n+1$, and hence for the smallest singular value of $S$ by the Eckhard-Young Theorem
   $$
        \frac{1}{\| S^\dagger \|} = \sigma_{m+n+1}(S) \leq \| S - \widetilde S \|
        \leq \sqrt{2(m+n+1)} \, \| S \| \, d(r,\widetilde r) ,
   $$
   as claimed above.
\end{proof}

We are now prepared to proceed with a proof of Theorem~\ref{thm_rational}.

\begin{proof}{of Theorem~\ref{thm_rational}(a).\, }
  Let $\sigma,\tau \in \mathbb D$ with $\widetilde r(\sigma)=0$, $\widetilde r(\tau)=\infty$, then $\chi(r(\tau),r(\sigma))\geq 1/3$ because of
  \begin{eqnarray*}
     1 &=& \chi(\widetilde r(\tau),\widetilde r(\sigma)) \leq
     \chi(r(\tau),r(\sigma))  + \chi(\widetilde r(\tau),r(\tau)) + \chi(r(\sigma),\widetilde r(\sigma))
     \\&\leq &
     \chi(r(\tau),r(\sigma)) + 2 \, \chi_{\mathbb D}(r,\widetilde r) \leq
     \chi(r(\tau),r(\sigma)) + \frac{2}{3} .
  \end{eqnarray*}
  Consider the spherical derivative
  \begin{equation} \label{rational1}
      r^{\#}(z) := \frac{|r'(z)|}{1+|r(z)|^2} .
  \end{equation}
  We claim that
  \begin{equation} \label{rational2}
     \frac{\chi(r(\tau),r(\sigma))}{|\tau - \sigma|} \leq
     \max_{z\in \mathbb D} r^{\#}(z) \leq \sqrt{2} (m+n+1)^{3/2} \, \kappa(S)
  \end{equation}
  which implies that $|\tau - \sigma|\geq 1/(3 \sqrt{2} (m+n+1)^{3/2} \, \kappa(S))$, as claimed in Theorem~\ref{thm_rational}.

  In order to show the left-hand inequality of \eqref{rational2}, recall from
  \cite{Sch} that the chordal metric is dominated by
  $$
      \forall \, w_1,w_2\in \overline{\mathbb C}:  \quad \chi(w_1, w_2) \leq \int_\gamma \frac{|dw|}{1+|w|^2},
  $$
  where $\gamma$ is any differentiable curve in the extended complex plane joining $w_1$ with $w_2$. Taking $\gamma: \mathbb D \supset [\sigma,\tau]\ni z \mapsto r(z)\in \overline{\mathbb C}$, we conclude that
  $$
      \chi(r(\sigma),r(\tau)) \leq \int_\gamma \frac{|dw|}{1+|w|^2} =
      \int_{z\in [\sigma,\tau]}  r^{\#}(z) \, |dz| \leq |\sigma-\tau| \, \max_{z\in \mathbb D} r^{\#}(z) \, ,
  $$
  as claimed above. It remains to give an upper bound for $r^{\#}(z)$ for $z\in \mathbb D$, here we closely follow arguments of the proof of Theorem~\ref{thm_distance}.
  We have
  \begin{eqnarray*}
        r^{\#}(z) &=& \frac{| |p'(z)q(z)-q'(z)p(z)|}{|p(z)|^2+|q(z)|^2}
        \leq \frac{\Bigl\| \left[\begin{array}{cc} p'(z)\\ q'(z)\end{array}\right] \Bigr\|}{\sqrt{|p(z)|^2+|q(z)|^2}} \leq \| S^\dagger \| \, \Bigl\| \left[\begin{array}{cc} p'(z)\\ q'(z)\end{array}\right] \Bigr\|,
  \end{eqnarray*}
  where in the last step we have applied \eqref{distance4}. Since
  $$
       \Bigl\| \left[\begin{array}{cc} p'(z)\\ q'(z)\end{array}\right] \Bigr\|
       = \Bigl\| \left[\begin{array}{cc} 0,1,2z,...,m z^{m-1} & 0 \\ 0 & 0,1,2z,...,nz^{n-1} \end{array}\right] x(r) \Bigr\|
       \leq (m+n+1)^{3/2}
  $$
  and $1 \leq \sqrt{2} \, \| S \|$ by \eqref{distance_norm}, we obtain the second inequality claimed in \eqref{rational2}, and hence the part of Theorem~\ref{thm_rational} on Froissart doublets is shown.
\end{proof}

\begin{proof}{of Theorem~\ref{thm_rational}(b).\, }
   We start by observing that for the residual $\alpha_0$ of a simple pole $z_0\in \mathbb D$ of $r=p/q\in \mathcal R_{m,n}$ there holds
   $$
        \frac{1}{|\alpha_0|} = \frac{|q'(z_0)|}{|p(z_0)|} = r^{\#}(z_0)
        \leq \sqrt{2} (m+n+1)^{3/2} \, \kappa(S)
   $$
   where for the last inequality we have applied \eqref{rational2}.
   The assumption
   $2 (m+n+1)^2 \kappa(S) \chi_{\mathbb D}(r,\widetilde r)\leq 1/3$ together with Theorem~\ref{thm_distance} tells us that \eqref{eq_conditioning} holds, and thus also $\widetilde r$ is nondegenerate. By applying the same reasoning as for $r$, we obtain
   for the residual $\widetilde \alpha_0$ of a simple pole $\widetilde z_0\in \mathbb D$ of $\widetilde r=\widetilde p/\widetilde q\in \mathcal R_{m,n}$
   the claimed inequality
   $$
        \frac{1}{|\widetilde \alpha_0|}
        \leq \sqrt{2} (m+n+1)^{3/2} \, \kappa(\widetilde S)
        \leq 2\sqrt{2} \, (m+n+1)^{3/2} \, \kappa(S)
   $$
   where for the last inequality we have applied the first part of Lemma~\ref{lem_conditioning}.
\end{proof}

\begin{remark}\label{rem_neighborhood}
    {\rm
    Recall from the above proof of Theorem~\ref{thm_rational}(b) that we have shown the lower bound $1/((2(m+n+1))^{3/2} \kappa(S))$ for the modulus of any residual of a simple pole in the unit disk of any $\widetilde r\in \mathcal R_{m,n}$ solely under the hypothesis $\sqrt{2(m+n+1)} \, d(r,\widetilde r) \, \kappa(S)  \leq 1/3$, which according to Theorem~\ref{thm_distance} is weaker than the hypothesis
    $2 \, (m+n+1)^{2} \kappa(S)^2 \chi_{\mathbb D}(r,\widetilde r)\leq 1/3$ stated in Theorem~\ref{thm_rational}(b), and stronger than the hypothesis $\chi_{\mathbb D}(r,\widetilde r)\leq 1/3$ of Theorem~\ref{thm_rational}(a).

    In the numerical procedure described in \cite{GGT12}, the authors do not necessarily return the $[m|n]$ Pad\'e approximant $r=p/q$ but $\widetilde r=\widetilde p/\widetilde q$ obtained by replacing the $\ell$ leading coefficients of $p$ (or of $q$, but not of both since otherwise $\kappa(S)$ would be large) of modulus $\leq \epsilon$ by $0$. Thus $d(r,\widetilde r)\leq \| x(r) - x(\widetilde r) \|\leq \sqrt{\ell} \epsilon$, and Theorem~\ref{thm_rational}(a),(b) do apply provided that $\sqrt{2\ell(m+n+1)} \, \epsilon \kappa(S) \leq 1/3$.}
\end{remark}

\begin{remark}\label{rem_residual}
   {\rm
   By examining the above proofs and using elementary techniques of complex analysis we see that it is possible to generalize Theorem~\ref{thm_rational} to the case $r,\widetilde r\in \mathcal M(\mathbb D)$ of general meromorphic functions (at least if $r$ has no zeros/poles on the unit circle), but the price to pay is that the constants become less explicit, in particular there is no longer the condition number of a matrix.

   For instance, by examining the proof of Theorem~\ref{thm_rational}(a) we see that we can give a lower bound for the
   Euclidian
   distance between a pole and a zero of $\widetilde r$ in terms of the reciprocal of the maximum spherical derivative of $r$ on the unit disk $\mathbb D$ provided that $\chi_{\mathbb D}(r,\widetilde r)\leq 1/3$. Moreover, from the Rouch\'e Theorem we see that for any sufficiently small $\epsilon>0$ there exists a (computable) $\delta>0$ depending on $r$ and $\epsilon$ such that, for any $\widetilde r \in \mathcal M(\mathbb D)$ with $\chi_{\mathbb D}(r,\widetilde r)\leq \delta$ we have that
   the $\epsilon$-neighborhood of any pole or zero of $r$ contains the same number of poles or zeros of $\widetilde r$ counting multiplicities as $r$, and $\widetilde r$ has no other poles and zeros in $\mathbb D$.
   This constitutes an alternative approach to control Froissart doublets of $\widetilde r$.

   In addition, by possibly choosing a smaller $\delta>0$ we may insure that, for a simple pole of $r$, the residual of the corresponding simple pole of $\widetilde r$ differs from that of $r$ at most by $\epsilon$, giving a possibility to exclude small residuals for $\widetilde r$. Thus we may roughly summarize by saying that if $\chi_{\mathbb D}(r,\widetilde r)$ is sufficiently small then $r$ has a spurious pole if and only if $\widetilde r$ has.
}
\end{remark}

\begin{proof}{of Theorem~\ref{thm_convergence}.\, }
   By assumption
   and the second part of Lemma~\ref{lem_conditioning}
   $$
     \sqrt{2(m+n+1)} \, d(r,\widetilde r) \, \kappa(S)\geq 1,
   $$
   and a combination with Theorem~\ref{thm_distance} implies that
   $
           2 \, (m+n+1)^2 \, \chi_{\mathbb D}(r,\widetilde r) \, \kappa(S)^2\geq 1,
   $
   as claimed in Theorem~\ref{thm_convergence}.
\end{proof}

\section{Numerical GCD and other related results}\label{sec5}

\subsection{Froissart doublet and numerical GCD}\label{sec5.1}
One could wonder whether the existence of Froissart doublets of a rational function $r=p/q\in \mathcal R_{m,n}$, namely the existence of a zero $\sigma$ and a pole $\tau$ of $r$ with small
Euclidian
distance $|\sigma-\tau|$, is related to the fact that the pair $(p,q)$ is close to a similar pair $(\widetilde p,\widetilde q)$ with non-trivial greatest common divisor (GCD), or more generally being degenerate, that is, the quantity
$$
       \epsilon(p,q):=\min \Bigl\{ \Bigl\| x(r) - \left[\begin{array}{cc}
       \mbox{vec}(\widetilde p) \\ \mbox{vec}(\widetilde q) \end{array}\right]
       \Bigr\| : \widetilde p/\widetilde q \in \mathcal R_{m,n} \mbox{~is degenerate} \Bigr\}
$$
is small. This quantity 
has been discussed in \cite{BeLa}. According to \cite[Theorem~4.1 and Remark~4.3]{BeLa} we have
\begin{equation} \label{eps_formula}
       \epsilon(p,q) = \inf_{z\in \overline{\mathbb C}} \sqrt{\frac{|p(z)|^2}{1+|z|^2+...+|z|^{2m}}
       + \frac{|q(z)|^2}{1+|z|^2+...+|z|^{2n}}},
\end{equation}
the argument $z^*$ where the infimum is attained being called the closest common root (which is indeed a common root of the closest degenerate pair).
The following link between numerical GCD and Froissart doublets has been claimed without proof in \cite[Section 4]{BeLa}. For the sake of completeness we give here a proof.

\begin{lemma}\label{ngcd}
Let $\tau, \sigma \in\mathbb{D}$ satisfy $p(\sigma )=0$ and $q(\tau )=0$. Then
\begin{equation}\label{relNGCD}
\epsilon (p,q) \leq \min\conj{m,n} \mo{\sigma -\tau}.
\end{equation}
\end{lemma}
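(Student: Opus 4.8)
The plan is to invoke the explicit formula \eqref{eps_formula} for $\epsilon(p,q)$ and simply evaluate its integrand at the two natural points $z=\sigma$ and $z=\tau$, discarding in each case the summand that vanishes. Plugging in $z=\sigma$ and using $p(\sigma)=0$ gives
$$\epsilon(p,q)\le \frac{|q(\sigma)|}{\bigl(1+|\sigma|^2+\cdots+|\sigma|^{2n}\bigr)^{1/2}},$$
and symmetrically $z=\tau$ with $q(\tau)=0$ gives $\epsilon(p,q)\le |p(\tau)|\big/\bigl(1+|\tau|^2+\cdots+|\tau|^{2m}\bigr)^{1/2}$. I will then bound the first right-hand side by $n\,|\sigma-\tau|$ and the second by $m\,|\sigma-\tau|$; taking the smaller of the two proves the lemma. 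The conceptual point is that at $z=\sigma$ the degree of $p$ is irrelevant, so the estimate there only sees $n$, and by symmetry the estimate at $z=\tau$ only sees $m$ — this is exactly what produces $\min\{m,n\}$ rather than $\max\{m,n\}$.

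For the bound at $z=\sigma$ I would use $q(\tau)=0$ to extract the factor $\sigma-\tau$: writing $q(\sigma)=q(\sigma)-q(\tau)=(\sigma-\tau)\sum_{j=1}^n q_j g_j$ with $g_j=\sum_{k=0}^{j-1}\sigma^k\tau^{j-1-k}$, a first Cauchy--Schwarz applied to the $j$-term sum defining $g_j$, together with $|\tau|\le 1$, gives $|g_j|^2\le j\sum_{k=0}^{j-1}|\sigma|^{2k}|\tau|^{2(j-1-k)}\le j\bigl(1+|\sigma|^2+\cdots+|\sigma|^{2n}\bigr)$, whence $\sum_{j=1}^n|g_j|^2\le \tfrac{n(n+1)}{2}\bigl(1+|\sigma|^2+\cdots+|\sigma|^{2n}\bigr)\le n^2\bigl(1+|\sigma|^2+\cdots+|\sigma|^{2n}\bigr)$. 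A second Cauchy--Schwarz, using $\|\mbox{vec}(q)\|\le 1$ (coefficient vectors being normalised), then yields $|q(\sigma)|\le n\,|\sigma-\tau|\,\bigl(1+|\sigma|^2+\cdots+|\sigma|^{2n}\bigr)^{1/2}$, and dividing by the square-root factor gives $\epsilon(p,q)\le n\,|\sigma-\tau|$. Running the identical computation with the roles of $(p,m,\sigma)$ and $(q,n,\tau)$ exchanged — now exploiting $|\sigma|\le 1$ in the estimate of $h_j=\sum_{k=0}^{j-1}\tau^k\sigma^{j-1-k}$ and $\|\mbox{vec}(p)\|\le 1$ — gives $\epsilon(p,q)\le m\,|\sigma-\tau|$, and combining the two bounds finishes the proof.

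I do not expect a genuine obstacle here: the argument reduces to two applications of Cauchy--Schwarz plus the trivial inequality $n(n+1)/2\le n^2$. The single point requiring care is to resist splitting into the cases $|\sigma|\le|\tau|$ and $|\tau|\le|\sigma|$: such a split forces the use of $\max\{|\sigma|,|\tau|\}$ and only delivers a $\max\{m,n\}$ bound, whereas in fact each of the two endpoint estimates survives on its own (thanks to $|\tau|\le 1$, respectively $|\sigma|\le 1$), so one is free to retain whichever of $m$ and $n$ is smaller.
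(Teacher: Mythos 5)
Your proof is correct and follows essentially the same route as the paper's: evaluate the explicit formula \eqref{eps_formula} at the two roots $z=\sigma$ and $z=\tau$, factor $\sigma-\tau$ out of the divided difference, and apply Cauchy--Schwarz twice using $\|\mbox{vec}(p)\|,\|\mbox{vec}(q)\|\leq 1$ and $|\sigma|,|\tau|\leq 1$. The only difference is the (immaterial) order in which the two Cauchy--Schwarz steps are arranged.
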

\begin{proof}
Since $\mo{\sigma}\leq 1$,  $\mo{\tau}\leq 1$, $\| \mbox{vec}(p)\|\leq 1$, using  twice the
Cauchy-Schwarz  inequality we obtain
\begin{eqnarray*}
\mo{p(\tau )} & = & \mo{p(\tau )-p (\sigma )}=\mo{\sum_{k=1}^m p_k (\tau^k-\sigma^k)}\leq \sum_{k=1}^m\mo{p_k}\mo{\tau^k-\sigma^k}\\
&=&\mo{\tau -\sigma}\sum_{k=1}^m \mo{p_k}\mo{\sum_{i=0}^{k-1}\tau^i\sigma^{k-i-1}}\leq \mo{\tau -\sigma}\sum_{k=1}^m\mo{p_k}\left(\sum_{i=0}^{k-1}\mo{\tau}^i\right)\\
\\
&\leq & \mo{\tau -\sigma}m\sqrt{\sum_{i=0}^{2m}\mo{\tau}^i}.
\end{eqnarray*}
Using a similar argument for $q(\sigma)$ and replacing in \eqref{eps_formula},  the claimed inequality \eqref{relNGCD} follows.
\end{proof}

\subsection{Numerical GCD and structured smallest singular values}\label{sec5.2}
Recall from Lemma~\ref{lem_degeneracy} that $\widetilde r=\widetilde p/\widetilde q$ is degenerate if and only if the corresponding Sylvester-like matrix $\widetilde S$ is not of full rank. According to the arguments in the proof of, e.g., \eqref{distance_norm} or Lemma~\ref{lem_conditioning}, the expression $\| x(r) - x(\widetilde r) \|$ in the definition of $\epsilon(p,q)$ can be replaced, up to some modest power of $(m+n+1)$, by
$\| S - \widetilde S \|$ or by $\| S - \widetilde S \|/\| S \|$. In other words, $\epsilon(p,q)$ is essentially the absolute or relative distance of $S$ to the set of not full rank Sylvester-like matrices, a kind of smallest structured singular value of $S$, or reciprocal structured condition number.
Since the distance to the set of all not full rank matrices is smaller, we get from the Eckhard-Young Theorem that $\frac{1}{\kappa(S)} \lesssim \epsilon(p,q)$, which is essentially the finding of the second part of Lemma~\ref{lem_conditioning}. In particular, the inequality $\epsilon(p,q) \lesssim |\sigma - \tau|$ of Lemma~\ref{ngcd} implies ${1} \lesssim \kappa(S) \, |\sigma - \tau|$, a result which is established rigorously in Theorem~\ref{thm_rational}(a).

We should mention the relation with \cite{BeLa,CGTW} who both do not argue in terms of our matrix $S$ defined in \eqref{def_Sylvester} but in terms of the classical square Sylvester matrix $\underline S$ of order $m+n$ obtained from $S$ by dropping the last column in each column block and the last 
row. However, we believe that this difference is not essential.
In \cite{CGTW} one looks at a gap in the singular values of $\underline S$ in order to find the degree of a numerical GCD, in particular, (normalized) pairs $(p,q)$ of polynomials with sufficiently ``large'' $\sigma_{m+n}(\underline S)\sim 1/\kappa(\underline S)$ should be considered as numerically coprime. This has to be compared with our notion of well-conditioned rational functions where $\kappa(S)$ is modest. While working with different vector norms, the authors in \cite{BeLa} 
introduce the estimator $$
       \kappa_{BL}:=\max(\| \underline S^{-1} e_1\|,\| \underline S^{-1}e_{m+n}\|),
$$
$e_j$ denoting the $j$th canonical vector, and show that $1/\kappa_{BL}\lesssim \epsilon(p,q)$, and $\sqrt{\kappa(\underline S)}\lesssim \kappa_{BL} \leq \kappa(\underline S)$.

Extending the arguments of \cite{BeLa}, we get the following sharper complement of Theorem~\ref{thm_convergence}.

\begin{lemma}\label{lem_convergence}
   For the nondegenerate $[m|n]$ Pad\'e approximant $r=p/q$ and the (possibly degenerate)
   $[m-1|n-1]$ Pad\'e approximant $\widetilde r = u/v$ we have that
   for all $|z|\leq 1$
   $$
              \kappa \, \chi(r(z),\widetilde r(z)) \geq |z|^{m+n-1} , \quad
              \kappa:=\min\{ 2 (m+n+1)^{{3}/{2}} \,\kappa(\underline{S}) ,
              (m+n+1)^2\, \kappa_{BL} \}.
   $$
\end{lemma}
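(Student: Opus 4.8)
The plan is to mimic the proof of Theorem~\ref{thm_convergence}, but instead of invoking the Eckhard--Young Theorem through $\kappa(S)$ I would work directly with the distance $\epsilon(p,q)$ to the set of degenerate pairs, exploiting that the subdiagonal approximant $\widetilde r = u/v \in \mathcal R_{m-1,n-1}$ already lives in a lower-dimensional space, hence ``is'' degenerate when embedded into $\mathcal R_{m,n}$. More precisely, viewing $u/v$ as an element of $\mathcal R_{m,n}$ (with leading coefficients $u_m = v_n = 0$) gives a degenerate pair in the sense of the defect, so by the variational characterization \eqref{eps_formula} of $\epsilon(p,q)$ due to \cite{BeLa} we have
$$
      \epsilon(p,q)^2 \leq \inf_{z\in\overline{\mathbb C}} \left( \frac{|u(z)|^2}{1+|z|^2+\cdots+|z|^{2m}} + \frac{|v(z)|^2}{1+|z|^2+\cdots+|z|^{2n}} \right),
$$
since $(u,v)$ is an admissible (degenerate) competitor after the embedding. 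The first step, then, is to bound this right-hand side by a constant times $\chi(r(z),\widetilde r(z))$ times a quantity controlled by $\kappa(\underline S)$ or $\kappa_{BL}$, evaluated at a point $z$ with $|z|\leq 1$.

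Next I would relate $\epsilon(p,q)$ to the two estimators. For the $\kappa(\underline S)$ bound: by the same reasoning as in the proof of Lemma~\ref{lem_conditioning} and the Eckhard--Young Theorem, $1/\kappa(\underline S) \lesssim \epsilon(p,q)$ up to a factor of order $(m+n+1)^{1/2}$ or so coming from the passage between $\| x(r)-x(\widetilde r)\|$ and $\|S-\widetilde S\|$ (or $\|\underline S - \widetilde{\underline S}\|$), which accounts for the $2(m+n+1)^{3/2}$ in the statement. For the $\kappa_{BL}$ bound: the estimate $1/\kappa_{BL} \lesssim \epsilon(p,q)$ is exactly the result recalled from \cite{BeLa} just above the lemma, again with a modest power of $m+n+1$ absorbing the change of norm, giving the $(m+n+1)^2$ factor. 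Taking the better of the two produces the stated $\kappa$.

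The remaining and most delicate step is to convert the infimum in \eqref{eps_formula} into a pointwise bound involving the chordal distance, at a specific $z$ with $|z|\le 1$, and to extract the factor $|z|^{m+n-1}$. Here I would argue as in the proof of Theorem~\ref{thm_rational}: write $\chi(r(z),\widetilde r(z))$ as $|p(z)v(z)-q(z)u(z)|$ divided by $\sqrt{|p(z)|^2+|q(z)|^2}\sqrt{|u(z)|^2+|v(z)|^2}$, and observe that the cross-term $p v - q u$ has degree $\leq m+n-1$ and vanishes to high order, so after comparing with $\epsilon(p,q)$ and using \eqref{distance4}-type lower bounds $1 \lesssim \|\underline S^\dagger\|\sqrt{|p(z)|^2+|q(z)|^2}$ one recovers a factor $|z|^{m+n-1}$ from the denominator polynomials $1+|z|^2+\cdots$ in \eqref{eps_formula}. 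The main obstacle I anticipate is the careful bookkeeping of the polynomial degrees and of the exponent $m+n-1$: one must choose the right normalization of $u/v$ (unit coefficient vector), track how the embedding $\mathcal R_{m-1,n-1}\hookrightarrow\mathcal R_{m,n}$ interacts with the weighting polynomials $1+\cdots+|z|^{2m}$ versus $1+\cdots+|z|^{2(m-1)}$, and verify that the power of $|z|$ that survives is exactly $m+n-1$ rather than $m+n$ or $m+n-2$. Once the degree count is pinned down, combining the pieces is routine.
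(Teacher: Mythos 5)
There is a genuine gap here, and in fact the central inequality you start from is false. The formula \eqref{eps_formula} is an \emph{identity} expressing $\epsilon(p,q)$ through the values of $p$ and $q$ themselves at the closest common root; it is not a variational principle into which you may substitute an arbitrary degenerate competitor $(u,v)$. What the degeneracy of the embedded pair $(u,v)$ actually gives you is $\epsilon(p,q)\leq d(r,\widetilde r)$ in coefficient space (take $(u,v)$ scaled to zero to see that your displayed inequality cannot hold: its right-hand side vanishes while $\epsilon(p,q)>0$). Moreover, even if you repair this and run the route through $\epsilon(p,q)$ and Eckart--Young, you land back at Theorem~\ref{thm_convergence} with its factor $\kappa(S)^2$; the whole point of Lemma~\ref{lem_convergence} is to get a \emph{single} power of the condition number, which that detour cannot deliver. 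A further directional error: to bound $\chi(r(z),\widetilde r(z))$ from \emph{below} you need \emph{upper} bounds on the denominators $\sqrt{|p(z)|^2+|q(z)|^2}$ and $\sqrt{|u(z)|^2+|v(z)|^2}$, i.e.\ the inequality \eqref{distance13}, not the lower bound \eqref{distance4} you invoke.

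The idea you correctly identify --- that $p(z)v(z)-q(z)u(z)$ has degree at most $m+n-1$ and vanishes to high order at $0$ --- is exactly the engine of the paper's proof, but you do not close the loop. The paper normalizes $(u,v)$ by taking its coefficient vector to be $\underline S^{-1}e_{m+n}$, so that the cross term is \emph{exactly} $z^{m+n-1}$ (constant $1$), whence $\chi(r(z),\widetilde r(z))=|z|^{m+n-1}/\bigl(\sqrt{|p(z)|^2+|q(z)|^2}\,\sqrt{|u(z)|^2+|v(z)|^2}\bigr)$ for the chosen representatives. The two square roots are then bounded above on $\mathbb D$ by $\sqrt{m+n+1}\,\|S\|$ and $\sqrt{m+n+1}\,\|\widehat S\|$ respectively via \eqref{distance13}, where $\widehat S$ is the Sylvester-like matrix of $(u,v)$; finally $\|\widehat S\|\leq \sqrt{m+n+1}\,\|\underline S^{-1}e_{m+n}\|\leq\sqrt{m+n+1}\,\kappa_{BL}$ and $\|\underline S\|\sim\|S\|$ produce both branches of the minimum defining $\kappa$. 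This normalization step --- which is precisely how $\kappa_{BL}$ (through $\|\underline S^{-1}e_{m+n}\|$) enters the bound --- is the missing idea in your proposal.
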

\begin{proof}
   Notice that $\underline S^{-1} e_{n+m}$ is a not normalized coefficient vector of the rational function $u/v\in \mathcal R_{m-1,n-1}$ satisfying $q(z)u(z)-p(z)v(z)=z^{m+n-1}$ and hence $$
     q(z)(f(z)v(z) - u(z))=v(z)(f(z)q(z) - p(z))+\mathcal O(z^{m+n-1})_{z\to 0}=\mathcal O(z^{m+n-1})_{z\to 0}.
   $$ Then the relation $q(0)\neq 0$ implies that $\widetilde r=u/v$ is the $[m-1|n-1]$ Pad\'e approximant of $f$.

   Writing in this proof $\widehat S\in \mathbb C^{(m+n-1)\times(m+n)}$ for the Sylvester-like matrix of $(u,v)$, we find as in the proof of \eqref{T_C3} that $\| \widehat S \| \leq \sqrt{m+n+1} \, \| \underline S^{-1} e_{n+m} \| \leq \sqrt{m+n+1} \, \kappa_{BL} \leq \sqrt{m+n+1} \, \| \underline S^{-1} \|$. We also have that $\| \underline S \| \leq \| S \| \leq \min \{ \sqrt{m+n+1}, 2 \| \underline S \| \}$ since one is a submatrix of the other. It follows that
   $\kappa \geq (m+n+1) \| S \| \, \| \widehat S \|$.
   Consequently, for all $|z|\leq 1$,
   \begin{eqnarray*}
            \kappa \, \chi(r(z),\widetilde r(z))
            &\geq&
            (m+n+1) \| S \| \, \| \widehat S \| \, \chi(r(z),\widetilde r(z))
            \\&\geq&
            |z|^{m+n-1} \,
            \frac{\sqrt{m+n+1}\, \| S \|}{\sqrt{|p(z)|^2+|q(z)|^2}}
            \frac{\sqrt{m+n+1}\, \| \widehat S \|}{\sqrt{|u(z)|^2+|v(z)|^2}} \geq |z|^{m+n-1} ,
   \end{eqnarray*}
   where in the last inequality we have applied twice \eqref{distance13}.
\end{proof}

Taking the maximum for $z\in \mathbb D$, we arrive at the following result, which we expect to be sharper
than Theorem~\ref{thm_convergence} since in the latter the factor $\kappa(S)^2$ did occur.
\begin{corollary}\label{cor_convergence}
   For the nondegenerate $[m|n]$ Pad\'e approximant $r=p/q$ and the (possibly degenerate)
   $[m-1|n-1]$ Pad\'e approximant $\widetilde r = u/v$ we have that
   $2 (m+n+1)^{{3}/{2}} \,\kappa(\underline{S}) \, \chi_{\mathbb D}(r,\widetilde r)\geq 1$.
\end{corollary}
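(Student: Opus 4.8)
The plan is to read off Corollary~\ref{cor_convergence} directly from Lemma~\ref{lem_convergence} by specializing the pointwise estimate there to the unit circle. Recall that Lemma~\ref{lem_convergence} asserts, for every $z$ with $|z|\le 1$,
$$
   \kappa\,\chi\bigl(r(z),\widetilde r(z)\bigr) \ge |z|^{m+n-1}, \qquad
   \kappa := \min\bigl\{\,2(m+n+1)^{3/2}\kappa(\underline S),\ (m+n+1)^2\kappa_{BL}\,\bigr\},
$$
where $\widetilde r=u/v$ is the $[m-1|n-1]$ Pad\'e approximant. Since $\widetilde r\in\mathcal R_{m-1,n-1}$ presupposes $m,n\ge 1$, the exponent $m+n-1$ is a nonnegative integer, so the right-hand side equals $1$ as soon as $|z|=1$.

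First I would fix any point $z_0$ on the unit circle; the displayed inequality then reads $\kappa\,\chi(r(z_0),\widetilde r(z_0))\ge 1$. Because the unit circle is contained in the closed unit disk $\mathbb D$ and $\chi_{\mathbb D}(r,\widetilde r)=\max_{z\in\mathbb D}\chi(r(z),\widetilde r(z))$ by \eqref{chordal}, it follows that $\kappa\,\chi_{\mathbb D}(r,\widetilde r)\ge \kappa\,\chi(r(z_0),\widetilde r(z_0))\ge 1$. Finally, by the very definition of $\kappa$ we have $\kappa\le 2(m+n+1)^{3/2}\kappa(\underline S)$, and therefore $2(m+n+1)^{3/2}\kappa(\underline S)\,\chi_{\mathbb D}(r,\widetilde r)\ge 1$, which is exactly the assertion of the corollary. (Equivalently, one may simply take the maximum over $z\in\mathbb D$ on both sides of the inequality of Lemma~\ref{lem_convergence}, using that $\max_{z\in\mathbb D}|z|^{m+n-1}=1$.)

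The only point that deserves a word of care is that $\chi(r(z_0),\widetilde r(z_0))$ be meaningful even when $z_0$ happens to be a pole of $r$ or of $\widetilde r$; this is automatic, since $\chi$ is the chordal distance on the Riemann sphere $\overline{\mathbb C}$ and rational functions extend continuously to $\overline{\mathbb C}$, exactly as already exploited throughout \S\ref{sec3} and \S\ref{sec4}. Consequently there is no genuine obstacle at this stage: the entire analytic content — the interplay of $\|S\|$, $\|\widehat S\|$ and the denominator estimate \eqref{distance13} — has already been carried out in the proof of Lemma~\ref{lem_convergence}, and the corollary is merely the evaluation of that pointwise bound at $|z|=1$ together with passing to the supremum over $\mathbb D$.
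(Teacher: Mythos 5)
Your argument is exactly what the paper does: the corollary is obtained from Lemma~\ref{lem_convergence} by taking the maximum over $z\in\mathbb D$ (equivalently, evaluating at $|z|=1$) and then dropping the $\kappa_{BL}$ alternative from the minimum defining $\kappa$. Correct, and essentially identical to the paper's one-line derivation.
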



\subsection{The work of Cabay and Meleshko}\label{sec5.3}
In order to jump over ``numerical blocks'' in the Pad\'e table by some look-ahead procedure, Cabay and Meleshko \cite{CaMe} (see also \cite[Section~3.6]{BGM}) needed to decide whether the $[m|n]$ Pad\'e approximant $r=p/q$ of $f$ is significantly different from the
the $[m-1|n-1]$ Pad\'e approximant $\widetilde r=\widetilde p/\widetilde q$.
Denoting by $\underline c$ the first column of the rectangular matrix $C$ introduced in
\eqref{homogeneous_system}, and by $\underline C$ the square Toeplitz matrix of order $n$ formed by the other columns, we know from \eqref{homogeneous_system} that, with a suitable scalar $\widetilde e$,
\begin{eqnarray*} &&
     \mbox{vec}(q) = q(0) \left[\begin{array}{cc} 1 \\ - \underline C^{-1} \underline c \end{array}\right], \quad
     \underline C \mbox{vec}(\widetilde q) = \left[\begin{array}{cc} 0 \\  \widetilde e
     \end{array}\right], \quad \mbox{and thus} \quad
     \mbox{vec}(\widetilde q) = \widetilde e \underline C^{-1} e_n
\end{eqnarray*}
where $\widetilde q(z) f(z) - \widetilde p(z) = \widetilde e z^{m+n-1} + \mathcal O(z^{m+n})_{z\to 0}$.  The authors in \cite{CaMe} suggested to use the normalization $\| \mbox{vec}(q) \| = \|\mbox{vec}(\widetilde q) \|=1$ and used the indicator
$$
       \kappa_{CM} = \frac{1}{|q(0) \widetilde e |}
$$
as an estimator for $\| \underline C^{-1} \|$, motivated by parts (i),(ii) of the following statement.

\begin{lemma}\label{lem_CM}
   We have that (i) $\kappa_{CM} \geq \| \underline C^{-1} \|/n$, (ii) $\kappa_{CM} \leq \sqrt{n} \, \| \underline C^{-1} \|^2$, (iii) $\sigma_{n}(C)\geq 1/(n \kappa_{CM})$, and
   (iv) $\kappa_{CM} \sim \| \underline S^{-1} e_{m+n} \| \leq \kappa_{BL}$.
\end{lemma}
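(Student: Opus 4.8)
The plan is to collapse the four assertions onto a single closed form for $\kappa_{CM}$ and then dispatch (ii)--(iv) fairly quickly, leaving (i) as the one genuinely hard point.

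First I would eliminate the scalars $q(0),\widetilde e$. Substituting $\mbox{vec}(q)=q(0)(1,-\underline C^{-1}\underline c)^t$ and $\mbox{vec}(\widetilde q)=\widetilde e\,\underline C^{-1}e_n$ into the normalizations $\|\mbox{vec}(q)\|=\|\mbox{vec}(\widetilde q)\|=1$ gives $|q(0)|=(1+\|\underline C^{-1}\underline c\|^2)^{-1/2}$ and $|\widetilde e|=\|\underline C^{-1}e_n\|^{-1}$, hence
$$\kappa_{CM}=\sqrt{1+\|\underline C^{-1}\underline c\|^2}\;\|\underline C^{-1}e_n\|=\frac{\|\underline C^{-1}e_n\|}{|q(0)|}.$$
By \eqref{scaling} every column of $C$ has norm $\le1$, so $\|\underline c\|\le1$ and $\sigma_n(\underline C)\le\|\underline C e_1\|\le1$, i.e.\ $\|\underline C^{-1}\|\ge1$.

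Part (ii) is then immediate from $\|\underline C^{-1}\underline c\|\le\|\underline C^{-1}\|$, $\|\underline C^{-1}e_n\|\le\|\underline C^{-1}\|$ and $\|\underline C^{-1}\|\ge1$, which give $\kappa_{CM}\le\sqrt2\,\|\underline C^{-1}\|^2\le\sqrt n\,\|\underline C^{-1}\|^2$. For (iii), note $\underline C\underline C^*=CC^*-\underline c\,\underline c^*\preceq CC^*$, hence $\sigma_n(C)\ge\sigma_n(\underline C)=\|\underline C^{-1}\|^{-1}$, so (iii) follows directly from (i). (I would also record the sharper bound $\|\underline C^{-1}\|\le\|C^\dagger\|/|q(0)|$, obtained by decomposing $(0,u)^t=\gamma\,\mbox{vec}(q)+w$ with $w\perp\mbox{vec}(q)$, $|\gamma|\le\sqrt{1-|q(0)|^2}$, so $\|\underline C u\|=\|Cw\|\ge\sigma_n(C)\|w\|\ge\sigma_n(C)|q(0)|$; this clarifies the geometry but is not by itself enough for (i), since $\|C^\dagger\|$ may exceed $n\|\underline C^{-1}e_n\|$.)

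The crux is (i): $\|\underline C^{-1}\|\le n\kappa_{CM}=n\,\|\underline C^{-1}e_n\|/|q(0)|$. Here the Toeplitz structure of $\underline C$ is essential -- two suitably chosen columns of $\underline C^{-1}$ already control the whole norm. I would use that $\underline C^{-1}$ has displacement rank $\le2$, so by a Gohberg--Semencul--type representation it is a sum of at most two products $L_iU_i$ of triangular Toeplitz matrices, each generated by a vector that -- via the shift relations linking the consecutive Toeplitz windows inside $C$, e.g.\ $\underline C e_1=Z\underline c+c_m e_1$ for the lower shift $Z$ -- is a fixed combination of $\underline C^{-1}e_1$, $\underline C^{-1}e_n$ and $\underline C^{-1}\underline c$. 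Since $\|L(a)\|,\|U(a)\|\le\sqrt n\,\|a\|$ for triangular Toeplitz matrices with generating vector $a$, this yields $\|\underline C^{-1}\|\lesssim n(\|\underline C^{-1}e_n\|+\|\underline C^{-1}\underline c\|)$; as $\|\underline C^{-1}\underline c\|=\sqrt{1-|q(0)|^2}/|q(0)|$ and $\|\underline C^{-1}e_n\|\ge\|\underline C\|^{-1}\ge n^{-1/2}$, both terms are $\lesssim\sqrt n\,\kappa_{CM}$, and the bookkeeping can be tuned to the stated constant $n$. I expect the real difficulty to lie exactly here: arranging the Toeplitz inversion so that only the two available vectors $\underline C^{-1}e_n$ and $\underline C^{-1}\underline c$ (equivalently $\mbox{vec}(\widetilde q)$ and $\mbox{vec}(q)$) appear -- the near-triangular extreme cases show that one of the two alone does not suffice -- and in controlling the ``$e_1$'' generator, whose natural coefficient $1/c_m$, and whose appearance in a Gohberg--Semencul formula, require care when $c_m$, or a corner entry of $\underline C^{-1}$, is small.

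For (iv) I would make $\kappa_{CM}$ explicit. Since $u/v$ with $qu-pv=z^{m+n-1}$ (cf.\ the proof of Lemma~\ref{lem_convergence}) is the $[m-1|n-1]$ Pad\'e approximant, it equals $\widetilde p/\widetilde q$ as a rational function; matching leading error terms in $vf-u=-z^{m+n-1}/q(0)+\mathcal O(z^{m+n})$ against $\widetilde q f-\widetilde p=\widetilde e z^{m+n-1}+\mathcal O(z^{m+n})$ gives $\widetilde q=\lambda v$, $\widetilde e=-\lambda/q(0)$, $|\lambda|=1/\|\mbox{vec}(v)\|$, hence $\kappa_{CM}=1/|q(0)\widetilde e|=\|\mbox{vec}(v)\|$. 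As $\underline S^{-1}e_{m+n}$ is (up to signs) the coefficient vector $(\mbox{vec}(u),\mbox{vec}(v))$, this gives $\kappa_{CM}=\|\mbox{vec}(v)\|\le\|\underline S^{-1}e_{m+n}\|\le\max(\|\underline S^{-1}e_1\|,\|\underline S^{-1}e_{m+n}\|)=\kappa_{BL}$; for the reverse comparison, reading off $\mbox{vec}(u)$ from $vf-u=-z^{m+n-1}/q(0)+\mathcal O(z^{m+n})$ as the degree-$\le m-1$ truncation of $v$ times the lower triangular Toeplitz matrix of $c_0,\dots,c_{m+n}$ -- of norm $\le\sqrt{m+n+1}$ by \eqref{scaling} -- gives $\|\mbox{vec}(u)\|\le\sqrt{m+n+1}\,\|\mbox{vec}(v)\|$, so $\|\underline S^{-1}e_{m+n}\|\le\sqrt{m+n+2}\,\kappa_{CM}$ and $\kappa_{CM}\sim\|\underline S^{-1}e_{m+n}\|$.
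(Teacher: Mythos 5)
Your reduction of everything to the closed form $\kappa_{CM}=\sqrt{1+\|\underline C^{-1}\underline c\|^2}\,\|\underline C^{-1}e_n\|$ is exactly how the paper proceeds, and your parts (ii), (iii) and (iv) are correct and essentially identical to the paper's arguments ((iv) in particular: the paper writes the identity $p\widetilde q-\widetilde pq=q(0)\widetilde e\,z^{m+n-1}$ as $\underline S\,x(\widetilde r)=-q(0)\widetilde e\,e_{m+n}$ and normalizes via Lemma~\ref{lem_T_C}, which is the same computation as your $\kappa_{CM}=\|\mbox{vec}(v)\|$ route). The only cosmetic issue there is that your chain $\kappa_{CM}\le\sqrt2\,\|\underline C^{-1}\|^2\le\sqrt n\,\|\underline C^{-1}\|^2$ fails at $n=1$; the paper instead bounds $\sqrt{1+\|\underline C^{-1}\underline c\|^2}\le\|\underline C^{-1}\|\,\|C\|_F$ with $\|C\|_F\le\sqrt n$ from \eqref{scaling}, which covers all $n$.

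Part (i), however, is a genuine gap, and you have correctly located it. You invoke a Gohberg--Semencul-type representation with generators that are ``a fixed combination of $\underline C^{-1}e_1$, $\underline C^{-1}e_n$ and $\underline C^{-1}\underline c$'' and then worry about controlling an $e_1$-generator with coefficient $1/c_m$; but that is not the form of the identity that makes the argument work, and the difficulty you anticipate does not arise in the correct formulation. The version of the Gohberg--Semencul formula adapted to the Pad\'e setting (cited in the paper as \cite[Theorem~3.6.2]{BGM}) reads
$$
   q(0)\,\widetilde e\;\underline C^{-1}=A_1A_2-A_3A_4,
$$
where $A_1,A_4$ are the lower/upper triangular Toeplitz matrices generated by $\mbox{vec}(q)$ and $A_2,A_3$ those generated by $\mbox{vec}(\widetilde q)$ --- i.e.\ the generators are exactly the two \emph{unit} vectors at your disposal (equivalently $q(0)(1,-\underline C^{-1}\underline c)^t$ and $\widetilde e\,\underline C^{-1}e_n$), there is no $e_1$-generator and no $1/c_m$, and the scalar prefactor is precisely $1/\kappa_{CM}$. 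With $\|\mbox{vec}(q)\|=\|\mbox{vec}(\widetilde q)\|=1$, a Cauchy--Schwarz estimate on the Frobenius norms, $\|A_1\|_F\|A_2\|_F+\|A_4\|_F\|A_3\|_F\le\bigl(\|A_1\|_F^2+\|A_4\|_F^2\bigr)^{1/2}\bigl(\|A_2\|_F^2+\|A_3\|_F^2\bigr)^{1/2}\le n$, then yields $\|\underline C^{-1}\|\le n\,\kappa_{CM}$ with the stated constant and no further ``bookkeeping''. Your fallback estimate $\|\underline C^{-1}\|\lesssim n\bigl(\|\underline C^{-1}e_n\|+\|\underline C^{-1}\underline c\|\bigr)$ would in any case only give $O(n^{3/2})\,\kappa_{CM}$ by your own accounting, so the precise identity above is what you are missing.
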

\begin{proof}
   The Gohberg-Semencul formula \cite[Theorem~3.6.2]{BGM} tells us that
   $q(0) \widetilde e \underline C^{-1} =A_1 A_2-A_3 A_4$, with the four matrices $A_1, A_2, A_3, A_4$ given by the triangular Toeplitz matrices
   {\small \begin{eqnarray*} &&
       \left[\begin{array}{cccc} q_0 & 0 & \cdots & 0 \\q_1 & q_0 & & \vdots \\
       \vdots & & \ddots & 0 \\ q_{n-1} & \cdots & \cdots & q_0
       \end{array}\right], \,
       \left[\begin{array}{cccc} \widetilde q_{n-1} & \cdots  & \widetilde q_1 & \widetilde q_0 \\ 0 & \widetilde q_{n-1} & & \widetilde q_1 \\
       \vdots & & \ddots & \vdots \\ 0 & \cdots & 0 & \widetilde q_{n-1}
       \end{array}\right], \,
       \left[\begin{array}{cccc} 0 & 0 & \cdots & 0 \\
       \widetilde q_0 & 0 & \cdots & 0 \\
       \vdots & & \ddots & 0 \\ \widetilde q_{n-2} & \cdots & \widetilde q_0 & 0
       \end{array}\right], \,
       \left[\begin{array}{cccc} q_{n} & \cdots  & q_2 & q_1 \\ 0 & q_{n} & & q_2 \\
       \vdots & & \ddots & \vdots \\ 0 & \cdots & 0 & q_{n}
       \end{array}\right].
   \end{eqnarray*}}
   Hence $$
   \frac{1}{\kappa_{CM}} \, \| \underline C^{-1} \| \leq \| A_1 \|_F \| A_2 \|_F + \| A_4 \|_F \| A_3 \|_F \leq \sqrt{\| A_1 \|_F^2  + \| A_4 \|_F^2}\sqrt{\| A_1 \|_F^2  + \| A_4 \|_F^2} = n,
   $$
   as claimed in part (i). By the normalization of the denominators we also find that
   $$
       \kappa_{CM} = \frac{1}{|q(0) \widetilde e |}
       = \| \underline C^{-1} e_n \| \, \sqrt{1 + \| \underline C^{-1} \underline c \|^2}
       \leq \| \underline C^{-1} \| \, \sqrt{\| \underline C^{-1} \underline C \|_F^2 + \| \underline C^{-1} \underline c \|^2} \leq \| \underline C^{-1} \|^2 \, \| C \|_F,
   $$
   where $\| C \|_F \leq \sqrt{n}$ by \eqref{scaling}, implying (ii).
   In view of (i), for establishing (iii) it is sufficient to notice that that
   $$
        \sigma_{n}(\underline C) = \min_{x\neq 0} \frac{\| x^* \underline C \|}{\| x \|} \leq \min_{x\neq 0} \frac{\| x^* C \|}{\| x \|} \leq \sigma_{n}(C).
   $$
   A proof of part (iv) is slightly more involved. Notice first that the normalization $\| \mbox{vec}(\widetilde q) \|=1$ of \cite{CaMe} does not lead to coefficient vectors $x(\widetilde r)$ of norm $1$, but $\| \mbox{vec}(\widetilde q)\|\leq \| x(\widetilde r)\| \leq (1+\| T \|)\| \, \mbox{vec}(\widetilde q)\|\leq (1+\sqrt{m+n+2}) \, \| \mbox{vec}(\widetilde q)\|$ by Lemma~\ref{lem_T_C}, and thus $\| x(\widetilde r) \|\sim 1$.
    We have
   $$
    p(z)\widetilde q(z)-\widetilde p(z)q(z) = q(z) (\widetilde q(z) f(z) - \widetilde p(z)) - \widetilde q(z) (q(z) f(z) - p(z))=q(0) \widetilde e z^{m+n-1}
   $$
   since it is a polynomial of degree at most $m+n-1$, and the powers $z^j$ vanish for $j<m+n-1$. This latter identity can be rewritten as $\underline S x(\widetilde r)= - q(0) \widetilde e \, e_{m+n}$, and thus
   $$
          \kappa_{CM} = \frac{\| \underline S^{-1}e_{m+n} \|}{\| x(\widetilde r) \|}\sim \| \underline S^{-1}e_{m+n} \| \leq \kappa_{BL}
   $$
   the last inequality following directly from the definition of $\kappa_{BL}$. This shows part (iv).
\end{proof}

The algorithm presented in \cite{CaMe} tries out all Pad\'e approximants of type $[m-j|n-j]$ for integer $j$ (i.e., on the same diagonal), and accepts to compute the $[m|n]$ Pad\'e approximant if $\kappa_{CM}$ is sufficiently small.
By Lemma~\ref{lem_CM}(iii), this means that in the Cabay-Meleshko algorithm we only compute robust Pad\'e approximants in the sense of {\bf (P2)}, that is, in the sense of Gonnet, G\"uttel and Trefethen \cite{GGT12}.

Finally, as in the proof of Lemma~\ref{lem_convergence} we get from Lemma~\ref{lem_CM}(iv) that $|z|^{m+n-1} \lesssim \kappa_{CM} \chi(r(z),\widetilde r(z))$ for all $|z|\leq 1$. In particular, a sufficiently small $\kappa_{CM}$ implies that $r$ and $\widetilde r$ are indeed significantly different.


\section{Conclusions}\label{sec6}

In this paper we have presented several results on the sensitivity of $[m|n]$ Pad\'e approximants, as well as on the occurrence of spurious poles. Our findings are expressed
in terms of four matrices, namely a rectangular Toeplitz matrix $C$ as in \cite{GGT12}, a rectangular striped Toeplitz matrix $T$, a square triangular Toeplitz matrix $Q$, and a rectangular Sylvester-like matrix $S=QT$,   see \eqref{homogeneous_system},\eqref{homogeneous_system2},\eqref{def_Sylvester}.
These four matrices satisfy
\begin{eqnarray*} &&
   \mbox{$\| C \| \lesssim 1$, $\| T \| \sim 1$, $\| S \| \sim 1$, $\| Q \| \lesssim 1$ due to scaling, see \eqref{scaling} and Lemma~\ref{lem_T_C},}
\\&&
   \mbox{$\| C^\dagger \| \sim \| T^\dagger \|\sim \kappa(T)$ and $\| T^\dagger \| \lesssim \| S^\dagger \|\sim \kappa(S)$, and $\| Q^{-1} \| \lesssim \| S^\dagger\|$,
   see Lemma~\ref{lem_T_C}}.
\end{eqnarray*}
We introduced a kind of hierarchical classification of $[m|n]$ Pad\'e approximants:
there are first the so-called nondegenerate Pad\'e approximants $r=p/q$ considered before in \cite{WW} which can characterize equivalently by one of the following properties
\begin{itemize}
\item the polynomials $p$ and $q$ are co-prime, and that the defect
$\min\{ m- \deg p , n - \deg q \}$ is equal to zero (see {\bf (P1)}), in other words, they correspond to entries located on the left or upper border of a block in the Pad\'e table in exact arithmetic;
\item the Pad\'e map is continuous, see \cite{WW} and Theorem~\ref{thm_continuity};
\item the matrix $S$ and hence $T$ and $C$ have full row rank, see Lemma~\ref{lem_degeneracy}.
\end{itemize}
Secondly there is the subclass of so-called robust Pad\'e approximants in the sense of \cite{GGT12} characterized by a 
sufficiently large $\sigma_{n}(C)$, or, equivalently, a modest $\kappa(T)$. We show that here
\begin{itemize}
   \item the real Pad\'e map is forward well-conditioned, but not necessarily backward, see Theorem~\ref{thm_stability}(c),(d)
       and Example~\ref{exa3};
   \item the Cabay-Meleshko algorithm \cite{CaMe} of \S\ref{sec5.3} computes also robust Pad\'e approximants along a diagonal, it is most of the times cheaper than the approach of \cite{GGT12} since it is recursive, but it might miss some robust approximants since the estimator $\kappa_{CM}$ might be larger than $\kappa(T)$;
\end{itemize}
Finally we have introduced in this paper the class of so-called well-conditioned Pad\'e approximants characterized by a modest $\kappa(S)$, which is hence a subclass of that of robust approximants. For these approximants we have established the following properties
\begin{itemize}
   \item we can control both Froissart doublets, namely the Euclidian distance between poles and zeros of $r$ in the unit disk, as well as small residuals in the disk, see Theorem~\ref{thm_rational};
   \item the real Pad\'e map is backward well-conditioned since $\| Q^{-1} T \|\lesssim \kappa(S)$, see Theorem~\ref{thm_stability}(d);
   \item it is equivalent to measure the distance to $\widetilde r\in \mathcal R_{m,n}$ through the uniform chordal metric in the unit disk or through the difference of normalized coefficient vectors, see Theorem~\ref{thm_distance};
   \item its numerator and denominator are numerically coprime in the sense of \cite{BeLa,CGTW}, see \S\ref{sec5.2}.
\end{itemize}
In the introduction we mentioned the question from \cite{GGT12} whether robust approximants do not have Froissart doublets nor small residuals.
Our Example~\ref{exa3} shows that such a statement is wrong in general, but we
were able to give a positive answer at least for well-conditioned Pad\'e approximants.

We can also draw from Theorem~\ref{thm_rational}, Theorem~\ref{thm_convergence}  and Remark~\ref{rem_residual} the conclusion that it is impossible to find well-conditioned Pad\'e approximants close to $f$ in $\mathbb D$ with small error for
functions  $f$ having themselves small residuals or Froissart doublets in the disk. However, the scaling assumption \eqref{scaling} at least asymptotically scales the complex plane in a way that $f$ will have no singularities in the (open) disk.

More important, Theorem~\ref{thm_convergence} and even more Corollary~\ref{cor_convergence} seem to indicate that there are only finitely many well-conditioned Pad\'e approximants along a fixed diagonal which are close to $f$ in the whole unit disk. 

For future work, it seems for us desirable to get a deeper understanding of the link between $\kappa(S)$ and $\kappa(T)$ (beyond the relation $\kappa(T)\lesssim \kappa(S)$), that is, the link between Pad\'e approximants which are robust and those which are well-conditioned.

Also, it would be nice to know whether the lower bounds of, e.g., Theorem~\ref{thm_rational} are sharp. We feel that the lower bounds should not involve unstructured condition numbers but so-called structured condition numbers, the latter taking into account the particular structure
of our matrix $S$, in the spirit of the discussions in \S\ref{sec5.1} and \S\ref{sec5.2}.
This will be further analyzed in a future work.
\vspace{0.8cm}

\noindent{\bf Acknowledgements.}
The authors gratefully acknowledge valuable discussions with Alexander Aptekarev and Stefan G\"uttel. 
We are also grateful for remarks of the referees which helped us improving the presentation.

\texttt{
Bernhard Beckermann, Ana C. Matos,\\
$\{$bbecker, matos$\}$@math.univ-lille1.fr\\
Laboratoire de Math\'ematiques P. Painlev\'e
UMR CNRS 8524 - Bat.M2\\
Universit\'e Lille - Sciences et Technologies \\
F-59655 Villeneuve d'Ascq Cedex, FRANCE
}

\end{document}